\documentclass[11pt]{amsart}

\usepackage{amssymb,amsthm,amsmath,url,relsize,etoolbox,cite,nameref,enumerate,fancyhdr}
\usepackage{hyperref}

\addtolength{\oddsidemargin}{-.875in}
\addtolength{\evensidemargin}{-.875in}
\addtolength{\textwidth}{1.75in}
\addtolength{\topmargin}{-.875in}
\addtolength{\textheight}{1.75in}


\setlength{\parindent}{0pt}

\theoremstyle{plain}
\newtheorem{theorem}[subsection]{Theorem}

\newtheorem{lemma}[subsection]{Lemma}

\newtheorem{proposition}[subsection]{Proposition}

\newtheorem{corollary}[subsection]{Corollary}

\newtheorem{definition}[subsection]{Definition}

\newtheorem{remark}[subsection]{Remark}

\DeclareMathOperator{\degree}{deg}
\DeclareMathOperator{\modulus}{mod}

\renewcommand{\Re}{\operatorname{Re}}

\newcommand*\charinv[1]{\overline{#1}}

\newcommand{\lsqrbra}{[}
\newcommand{\rsqrbra}{]}

\setcounter{secnumdepth}{1}

\begin{document}

\title{The Fourth Moment of Derivatives of Dirichlet $L$-functions in Function Fields.}
\author{J. C. Andrade}
\author{M. Yiasemides}
\date{\today}
\address{Department of Mathematics, University of Exeter, Exeter, EX4 4QF, UK}
\email{j.c.andrade@exeter.ac.uk}
\email{my298@exeter.ac.uk}

\subjclass[2010]{Primary 11M06; Secondary 11M38, 11M50, 11N36}
\keywords{moments of $L$-functions, Dirichlet character, polynomial, function field, derivative}

\maketitle

\begin{abstract}
\noindent We obtain the asymptotic main term of moments of arbitrary derivatives of $L$-functions in the function field setting. Specifically, the first, second, and mixed fourth moments. The average is taken over all non-trivial characters of a prime modulus $Q \in \mathbb{F}_q [t]$, and the asymptotic limit is as $\degree Q \longrightarrow \infty$. This extends the work of Tamam who obtained the asymptotic main term of low moments of $L$-functions, without derivatives, in the function field setting. It also expands on the work of Conrey, Rubinstein, and Snaith who cojectured, using random matrix theory, the asymptotic main term of any even moment of the derivative of the Riemann zeta-function in the number field setting.
\end{abstract}

\tableofcontents

\section{Introduction}

The moments of families of $L$-functions are part of an important area of research in analytic number theory. They are interlinked with the Lindel\"{o}f hypothesis and, while the asymptotic behaviour of the second and fourth moments are understood, higher moments have been resistant to breakthroughs for almost 100 years. \\

In 1916, Hardy and Littlewood \cite{ContrTheoryRZFDistPrimes_HL1916} proved that
\begin{align*}
\int_{0}^{T} \bigg\lvert \zeta \Big( \frac{1}{2} + it \Big) \big\rvert^2 \mathrm{d} t
\sim T \log T
\end{align*}
as $T \longrightarrow \infty$. In 1926, Ingham \cite{MeanValueTheoRMZ_Ingham1926} expanded on this by proving that
\begin{align*}
\int_{0}^{T} \bigg\lvert \zeta \Big( \frac{1}{2} + it \Big) \big\rvert^4 \mathrm{d} t
\sim \frac{1}{2 \pi} T \log^4 T 
\end{align*}
as $T \longrightarrow \infty$. Further results have since been obtained for the second and fourth moments, such as lower-order terms in the asymptotic expansions, but only conjectures have been obtained for higher moments. In 2000, Keating and Snaith \cite{RMTZeta_KeatingSnaith2000} conjecured, using random matrix theory, the asymptotic main term for all even moments. \\

Similar results and conjectures hold for families of $L$-functions, and we can work in the function field setting in addition to the classical number field setting. In this paper we look at the first, second and mixed fourth moments of arbitrary derivatives of $L$-functions in the function field setting, where we average over non-trivial Dirichlet characters of a prime modulus $Q \in \mathbb{F}_q [t]$, and we consider the asymptotic behaviour as $\degree Q \longrightarrow \infty$. \\

This expands the work of Tamam \cite{FourthMoment_Tamam} who obtained similar results, but without considering derivatives. There is an error in a part of her proof, but we have addressed that in this paper (see Remark \ref{Remark, Addressing Tamam Error}). Analogies can also be seen between our results and the work of Conrey, Rubinstein, and Snaith \cite{MomDerivCharPolyAppRZF_ConreyRubinsteinSnaith} who use random matrix theory to conjecture the asymptotic main terms of 
\begin{align*}
\int_{t=0}^{T} \Big\lvert \zeta ' \Big( \frac{1}{2}+it \Big) \Big\rvert^{2k} \mathrm{d}t 
\end{align*}
for all non-negative integers $k$. \\

\textbf{Acknowledgements:} The first author is grateful to the Leverhulme Trust (RPG-2017-320) for the support through the research project grant ``Moments of $L$-functions in Function Fields and Random Matrix Theory". The second author is grateful for an EPSRC Standard Research Studentship (DTP).


\section{Notation and Results}

For integer prime powers $q$ we have a finite field of order $q$, denoted by $\mathbb{F}_q$. The polynomial ring over this finite field is denoted by $\mathbb{F}_q [t]$, but because we are working with general $q$ we we can simply write $\mathcal{A}$. The subset of monic polynomials is denoted by $\mathcal{M}$. The degree of a polynomial is the standard definition, although we do not define it for the zero polynomial. Hence, the range $\degree A \leq n$, for any non-negative integer $n$, does not include the case $A=0$. For $A \in \mathcal{A} \backslash \{ 0 \}$ we define the norm of $A$ as $\lvert A \rvert := q^{\degree A}$, and for $A = 0$ we define $\lvert 0 \rvert := 0$. \\

Generally, we reserve upper-case letters for elements of $\mathcal{A}$, and the letters $P$ and $Q$ are reserved for prime polynomials. Note that primality and ireducibility are equivalent as $\mathcal{A}$ is a unique fatorisation domain. In this paper, the term ``prime" is taken to mean ``monic prime". We denote the set of monic primes in $\mathcal{A}$ by $\mathcal{P}$. For $A,B \in \mathcal{A}$ we denote the highest common factor and lowest common multiple by $(A,B)$ and $[A,B]$, respectively. \\

For a subset $\mathcal{S} \subseteq \mathcal{A}$ we define, for all non-negative integers $n$, $\mathcal{S}_n := \{ A \in \mathcal{S} : \degree A = n \}$. We identify $\mathcal{A}_0$ with $\mathbb{F}_q \backslash \{ 0 \} = {\mathbb{F}_q}^*$.

\begin{definition}[Dirichlet Character]
Let $R \in \mathcal{M}$. A \emph{Dirichlet character} on $\mathcal{A}$ of modulus $R$ is a function $\chi : \mathcal{A} \longrightarrow \mathbb{C}^*$ satisfying the folowing properties for all $A,B \in \mathcal{A}$:
\begin{enumerate}
\item $\chi (A) = \chi (B)$ if $A \equiv B (\modulus R)$; \label{Dir Char Def Modulus Point}
\item $\chi (AB) = \chi (A) \chi (B)$;
\item $\chi (A) = 0$ if and only if $(A,R) \neq 1$.
\end{enumerate}
\end{definition}

We denote a sum over all characters of modulus $R$ by $\sum_{\chi \modulus R}$. Also, note that we can view $\chi $ as a function on $\mathcal{A} / R \mathcal{A}$, which follows naturally from point \ref{Dir Char Def Modulus Point} above. This will allow us to use expressions such as $\chi (A^{-1})$ when $A \in (\mathcal{A} / R \mathcal{A})^*$. \\

We say $\chi$ is the \emph{trivial character} of modulus $R$ if $\chi (A) = 1$ for all $A \in \mathcal{A}$ satisfying $(A,R) =1$, and we denote such characters by $\chi_0$ (the dependence on the modulus $R$ is not shown with this notation, but when used it will be clear what the associated modulus is). We define the \emph{even characters} to be those characters $\chi$ satisfying $\chi (a) = 1$ for all $a \in \mathbb{F}_q^*$. Otherwise, we say that the character is \emph{odd}. It can be shown that there are $\phi (R)$ characters of modulus $R$ and $\frac{\phi (R)}{q-1}$ even characters of modulus $R$, where $\phi$ is the totient function. 

\begin{definition}[Dirichlet $L$-function]
Let $\chi$ be a Dirichlet character of modulus $R$. We define the associated \emph{Dirichlet $L$-function} as follows: For all $\ Re (s) > 1$,
\begin{align*}
L (s , \chi ) := \sum_{A \in \mathcal{M}} \frac{ \chi (A)}{\lvert A \rvert^s} .
\end{align*}
This has an analytic continution to $\mathbb{C}$ for non-trivial characters, and to $\mathbb{C} \backslash \{ 1 + \frac{2n \pi i}{\log q} : n \in \mathbb{Z} \}$ for trivial characters.
\end{definition}

\begin{definition}[Riemann Zeta-function in $\mathbb{F}_q \lsqrbra T \rsqrbra$]
When $\chi$ is the Dirichlet character of modulus $1$, then the associated Dirichlet $L$-function is simply the Riemann zeta-function on $\mathcal{A}$, namely
\begin{align*}
\zeta_{A} (s) := \sum_{A \in \mathcal{M}} \frac{1}{\lvert A \rvert^s}.
\end{align*}
\end{definition}

One may wish to note that in this paper, when expressing asymptotic behaviour, we remove all dependencies that we can from the implied constants. For example, if an implied constant can be taken to be $\frac{1}{q-1}$, then we will view this as independent of $q$ because $\frac{1}{q-1} \leq 1$ for all prime powers $q$. \\

We prove the following results.

\begin{theorem} \label{first_moment_theorem}
For all positive integers $k$, we have that
\begin{align*}
\frac{1}{\phi (Q)} \sum_{\substack{\chi \modulus Q \\ \chi \neq \chi_0}} L^{(k)} \Big( \frac{1}{2} , \chi  \Big)
= &\frac{-(-\log q)^k}{q^{\frac{1}{2}}-1} \frac{ (\degree Q)^k}{\lvert Q \rvert^{\frac{1}{2}}}
+ O_k \bigg( (\log q)^k \frac{ (\degree Q)^{k-1}}{\lvert Q \rvert^{\frac{1}{2}}} \bigg)
\end{align*}
as $\degree Q \longrightarrow \infty$ with $Q$ being prime. 
\end{theorem}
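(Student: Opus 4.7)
The plan is to exploit the fact that in the function field setting, for a non-trivial character $\chi$ of prime modulus $Q$ of degree $d$, the partial sums $C_n(\chi) := \sum_{A \in \mathcal{M}_n} \chi(A)$ vanish for all $n \geq d$, so $L(s,\chi)$ is a polynomial in $q^{-s}$ of degree at most $d-1$. Writing
\begin{align*}
L^{(k)}(s,\chi) = \sum_{n=0}^{d-1} (-n\log q)^k C_n(\chi) q^{-ns}
\end{align*}
and setting $s = 1/2$ reduces the problem to computing the average of $C_n(\chi)$ over non-trivial characters of modulus $Q$.

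For each $n$ with $0 \leq n \leq d-1$, every $A \in \mathcal{M}_n$ is coprime to $Q$ (since otherwise $\degree Q \leq n < d$), so character orthogonality gives
\begin{align*}
\frac{1}{\phi(Q)} \sum_{\chi \modulus Q} C_n(\chi) = \#\{A \in \mathcal{M}_n : A \equiv 1 \, (\modulus Q)\}.
\end{align*}
For $1 \leq n \leq d-1$, the congruence $A \equiv 1 \, (\modulus Q)$ forces $Q \mid (A-1)$, which is impossible by degree since $A \neq 1$. Removing the trivial character contribution $C_n(\chi_0) = q^n$ yields
\begin{align*}
\frac{1}{\phi(Q)} \sum_{\substack{\chi \modulus Q \\ \chi \neq \chi_0}} C_n(\chi) = -\frac{q^n}{\phi(Q)}, \quad 1 \leq n \leq d-1.
\end{align*}
The $n = 0$ term is killed by the factor $n^k$ coming from differentiation, so the whole average collapses to
\begin{align*}
\frac{1}{\phi(Q)} \sum_{\substack{\chi \modulus Q \\ \chi \neq \chi_0}} L^{(k)}\Big(\frac{1}{2}, \chi\Big) = -\frac{(-\log q)^k}{\phi(Q)} \sum_{n=1}^{d-1} n^k q^{n/2}.
\end{align*}

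The remaining step is to asymptotically evaluate the sum $S := \sum_{n=1}^{d-1} n^k q^{n/2}$. Reversing the index via $m = d - 1 - n$ and expanding $(d-1-m)^k$ by the binomial theorem yields
\begin{align*}
S = q^{(d-1)/2} \sum_{j=0}^{k} \binom{k}{j} (d-1)^{k-j} (-1)^j \sum_{m=0}^{d-2} m^j q^{-m/2},
\end{align*}
where each inner sum converges, as $d \to \infty$, to a polylogarithm-type constant depending only on $q$ and $j$, with an exponentially small tail. The $j=0$ term contributes the main part $(d-1)^k q^{d/2}/(q^{1/2} - 1)$, while the $j \geq 1$ terms contribute $O_k(d^{k-1} q^{d/2})$. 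Dividing through by $\phi(Q) = q^d - 1 = q^d(1 + O(q^{-d}))$ and using $(d-1)^k = d^k + O_k(d^{k-1})$, together with $\lvert Q \rvert^{1/2} = q^{d/2}$ and $\degree Q = d$, produces the stated main term and error. The main obstacle is simply keeping tight enough control on the error in $S$ (and on the tail of the $m$-sums) so that everything combines into the uniform bound $O_k((\log q)^k (\degree Q)^{k-1}/\lvert Q \rvert^{1/2})$; the polynomial reduction and orthogonality steps are otherwise clean and standard.
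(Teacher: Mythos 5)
Your proposal is correct and follows essentially the same route as the paper: expand $L^{(k)}(1/2,\chi)$ as a finite sum over monic $A$ with $\degree A < \degree Q$, use orthogonality to reduce the average over non-trivial $\chi$ to $-\frac{(-\log q)^k}{\phi(Q)}\sum_{n=1}^{\degree Q-1} n^k q^{n/2}$, and then evaluate that sum asymptotically. The only (immaterial) difference is in the last step, where you reverse the index and expand by the binomial theorem while the paper uses a telescoping identity; both give the same main term and the same $O_k\big((\degree Q)^{k-1}\lvert Q\rvert^{1/2}\big)$ error with implied constants independent of $q$.
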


\begin{theorem} \label{second_moment_theorem}
For all positive integers $k$ we have that
\begin{align*}
\frac{1}{\phi (Q)} \sum_{\substack{\chi \modulus Q \\ \chi \neq \chi_0}} \bigg\lvert L^{(k)} \bigg( \frac{1}{2} , \chi \bigg) \bigg\rvert^2
= \frac{(\log q)^{2k}}{2k+1} (\degree Q)^{2k+1} + O \Big( ( \log q )^{2k} (\degree Q)^{2k}\Big)
\end{align*}
as $\degree Q \longrightarrow \infty$ with $Q$ being prime.
\end{theorem}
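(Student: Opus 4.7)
The plan is to exploit the key fact that for a non-trivial character $\chi$ modulo a prime $Q$, the $L$-function truncates to a polynomial: $L(s,\chi) = \sum_{n=0}^{\degree Q - 1} q^{-ns} \sum_{A \in \mathcal{M}_n} \chi(A)$. Differentiating $k$ times in $s$ gives
\begin{align*}
L^{(k)} \Big( \tfrac{1}{2}, \chi \Big) = (-\log q)^k \sum_{n=0}^{\degree Q - 1} n^k q^{-n/2} \sum_{A \in \mathcal{M}_n} \chi(A),
\end{align*}
and so $\big| L^{(k)}(1/2,\chi) \big|^2$ expands as a finite double sum over pairs $(A,B)$ with $\degree A, \degree B \leq \degree Q - 1$.

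Next I would average over the non-trivial characters. Because $Q$ is prime and $\degree A, \degree B < \degree Q$, we have $(AB, Q) = 1$ automatically, and the congruence $A \equiv B \ (\modulus\, Q)$ forces $A = B$. Orthogonality then yields
\begin{align*}
\frac{1}{\phi(Q)} \sum_{\chi \neq \chi_0} \chi(A)\overline{\chi(B)} = \mathbf{1}_{A=B} - \frac{1}{\phi(Q)}.
\end{align*}
Substituting and using $|\mathcal{M}_n| = q^n$, the second moment decomposes into a diagonal main term $(\log q)^{2k} \sum_{n=0}^{\degree Q - 1} n^{2k}$ and an off-diagonal correction
\begin{align*}
-\frac{(\log q)^{2k}}{\phi(Q)} \bigg(\sum_{n=0}^{\degree Q - 1} n^k q^{n/2}\bigg)^2.
\end{align*}

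The diagonal sum is Faulhaber-type: $\sum_{n=0}^{N-1} n^{2k} = \frac{N^{2k+1}}{2k+1} + O(N^{2k})$ with $N = \degree Q$, giving precisely the claimed main term and the advertised error. For the off-diagonal piece, the inner geometric-type sum is dominated by its last term, giving a quantity of size $O\big((\degree Q)^k q^{\degree Q/2}\big)$; squaring and dividing by $\phi(Q) \asymp q^{\degree Q}$ then yields a contribution of size $O\big((\log q)^{2k} (\degree Q)^{2k}\big)$, safely within the stated error.

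The main obstacle here is essentially bookkeeping: one must be careful about the exclusion of $\chi_0$ (which contributes the uniform $-1/\phi(Q)$) and about the reduction $A \equiv B \pmod Q \Longleftrightarrow A = B$, which is only valid because of the degree bound $\degree A, \degree B < \degree Q$. Since the polynomial truncation removes any need for contour manipulation or oscillatory analysis, the argument reduces to applying orthogonality cleanly and then bounding two elementary sums, which is why the second moment is considerably easier than the fourth moment handled later in the paper.
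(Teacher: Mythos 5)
Your proposal is correct and follows essentially the same route as the paper: truncate $L(s,\chi)$ to a polynomial of degree $\degree Q - 1$, differentiate, apply orthogonality over non-trivial characters (with $A \equiv B \ (\modulus\, Q)$ forcing $A = B$ by the degree bound), evaluate the diagonal via Faulhaber, and bound the off-diagonal piece by $\frac{1}{\phi(Q)}\big(\sum_n n^k q^{n/2}\big)^2 \ll (\degree Q)^{2k}$. No gaps.
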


\begin{theorem} \label{fourth_moment_theorem}
For all non-negative integers $k,l$ we have that
\begin{align*}
&\frac{1}{\phi (Q)} \frac{1}{(\log q)^{2k+2l}} \sum_{\substack{\chi \modulus Q \\ \chi \neq \chi_0}} \Big\lvert L^{(k)} \Big( \frac{1}{2} , \chi \Big) \Big\rvert^{2} \Big\lvert L^{(l)} \Big( \frac{1}{2} , \chi \Big) \Big\rvert^{2} \\
&= (1-q^{-1}) (\degree Q)^{2k+2l+4} \int_{\substack{a_1 , a_2 , a_3 , a_4 \geq 0 \\ 2a_1 + a_3 + a_4 < 1 \\ 2a_2 + a_3 + a_4 < 1}}  f_k \big( a_1 + a_3 , a_1 + a_4 , 1 \big) f_l \big( a_2 + a_4 , a_2 + a_3 , 1 \big) \mathrm{d} a_1 \mathrm{d} a_2 \mathrm{d} a_3 \mathrm{d} a_4 \\
& \quad + O_{k,l} \Big( (\degree Q)^{2k+2l+\frac{7}{2}} \Big) 
\end{align*}
as $\degree Q \longrightarrow \infty$ with $Q$ being prime, where for all non-negative inetegers $i$ we define
\begin{align*}
f_{i} \big( x, y, z \big) = &x^i y^i + (z - x)^i (z - y)^i .
\end{align*}
\end{theorem}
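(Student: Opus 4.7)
The plan is to begin with the polynomial representation of $L(s, \chi)$: since $\chi$ is non-trivial modulo the prime $Q$ of degree $d := \degree Q$, one has $L^{(k)}\bigl(\tfrac{1}{2}, \chi\bigr) = (-\log q)^k \sum_{A \in \mathcal{M}, \, \degree A < d} (\degree A)^k \chi(A) |A|^{-1/2}$, using that $\sum_{A \in \mathcal{M}_n} \chi(A) = 0$ for $n \geq d$. Expanding $|L^{(k)}|^2 |L^{(l)}|^2$ as a quadruple sum over $(A_1, A_2, A_3, A_4) \in \mathcal{M}^4$ with each $\degree A_i < d$, averaging over $\chi \neq \chi_0$, and applying character orthogonality (with the negligible $\chi_0$ contribution absorbed into the error) reduces the problem to a weighted sum over tuples satisfying $A_1 A_3 \equiv A_2 A_4 \pmod{Q}$, with weight $(\degree A_1)^k (\degree A_2)^k (\degree A_3)^l (\degree A_4)^l |A_1 A_2 A_3 A_4|^{-1/2}$.

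Next I would split this sum into the diagonal, where $A_1 A_3 = A_2 A_4$ as polynomials, and the off-diagonal, where $A_1 A_3 - A_2 A_4 = c Q$ for some $c \in \mathcal{A} \setminus \{0\}$ with $\degree c \leq d - 2$. On the diagonal, every solution is uniquely parametrized by $A_1 = EF$, $A_2 = EG$, $A_3 = GH$, $A_4 = FH$ with $(E, F, G, H) \in \mathcal{M}^4$ and $(F, G) = 1$ (setting $E = (A_1, A_2)$ and then using coprimality to solve for $H$). The function-field Mertens identity $\sum_{D \in \mathcal{M}, \, \degree D = n} \mu(D) = 0$ for $n \geq 2$ gives the exact coprime count $\#\{(F, G) \in \mathcal{M}_f \times \mathcal{M}_g : (F, G) = 1\} = q^{f + g}(1 - q^{-1})$ for $f, g \geq 1$; boundary cases $f = 0$ or $g = 0$ contribute only to the error. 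Scaling $a_1 = (\degree E)/d$, $a_2 = (\degree H)/d$, $a_3 = (\degree F)/d$, $a_4 = (\degree G)/d$, the weighted sum becomes a Riemann sum approximating a four-dimensional integral.

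To obtain the theorem's precise integrand $f_k(a_1 + a_3, a_1 + a_4, 1) f_l(a_2 + a_4, a_2 + a_3, 1)$ and region $R_2 = \{a_i \geq 0, \; 2a_1 + a_3 + a_4 < 1, \; 2a_2 + a_3 + a_4 < 1\}$, the plan is to exploit the functional equation $L(s, \chi) = W(\chi) \, q^{(d - 1)(\frac{1}{2} - s)} L(1 - s, \overline{\chi})$ (with $|W(\chi)| = 1$) to produce a dual representation of $L^{(k)}(\tfrac{1}{2}, \chi)$ with weights $((d - 1) - \degree A)^k$ replacing $(\degree A)^k$. Averaging the direct and dual representations for each of $|L^{(k)}|^2$ and $|L^{(l)}|^2$ symmetrizes the weight so that, after diagonal parametrization, the degree factor in the integrand becomes precisely $f_k \cdot f_l$; the resulting natural constraints $\degree(A_1 A_2), \degree(A_3 A_4) < d$ cut out the region $R_2$ in the scaled variables. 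The cross-terms carrying factors of $W(\chi)$ or $W(\chi)^2$ that appear in this symmetrization are handled by evaluating twisted averages $\sum_\chi W(\chi)^j \chi(B) \overline{\chi}(B')$ using the explicit formula for $W(\chi)$ in terms of a Gauss sum and bounding them via the Riemann hypothesis for function-field $L$-functions.

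The main obstacle will be the off-diagonal contribution $A_1 A_3 - A_2 A_4 = c Q$ with $c \neq 0$: this is a shifted convolution problem in $\mathbb{F}_q[t]$, and together with the Gauss-sum cross-terms it must be shown to contribute $O_{k,l}(d^{2k + 2l + 7/2})$. Careful handling of this off-diagonal, together with the boundary corrections from the coprime count and the discretization error in the Riemann-sum approximation, is expected to match the stated error bound; the remark on Tamam's error referenced in the introduction presumably addresses a subtle point in this off-diagonal analysis.
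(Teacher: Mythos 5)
Your overall skeleton---orthogonality, the diagonal/off-diagonal split, the parametrization $A_1=EF$, $A_2=EG$, $A_3=GH$, $A_4=FH$ with $(F,G)=1$, and a Riemann-sum passage to the integral---matches the paper. But there are genuine gaps. The first is the order of operations: if you expand the \emph{full} sums $\sum_{\degree A_i < \degree Q}$ and then apply orthogonality, the off-diagonal is not an error term. For $\degree (A_1A_3)=z_1$ and $\degree (A_2A_4)=z_2$ near $2\degree Q$ the congruence $A_1A_3\equiv A_2A_4 \ (\modulus Q)$ has on the order of $q^{z_1+z_2-\degree Q}$ non-equal solutions, and after weighting by $q^{-(z_1+z_2)/2}$ the total is of size $q^{\degree Q}$ times polynomial factors, which swamps the main term $(\degree Q)^{2k+2l+4}$. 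The sums must be shortened to $\degree (AB)<\degree Q$ \emph{before} any off-diagonal analysis; that is the entire point of the paper's ``shorter sum'' propositions. Even after shortening, the off-diagonal bound is a nontrivial divisor-correlation count (imported from Lemma 7.10 of the cited companion paper), and this is exactly the step where Tamam's argument contained the error; asserting that it ``must be shown'' to be $O_{k,l}((\degree Q)^{2k+2l+7/2})$ is not a proof.

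The second gap is your mechanism for producing the shortened, symmetrized sums. The paper takes the squared modulus of both sides of the functional equation, so $\lvert W(\chi)\rvert^2=1$ eliminates the root number entirely; both sides are then the same polynomial in $q^{-s}$, and matching coefficients lets one take $n\le \degree Q-1$ from one side and the complementary range from the other, yielding the weight $i^kj^k+(\degree Q-i-1)^k(\degree Q-j-1)^k$ on the single region $i+j<\degree Q$---which is where $f_k$ comes from, with no $W(\chi)$ anywhere. Your proposal to average the direct and dual representations of $L^{(k)}$ itself (a) creates cross terms carrying $W(\chi)^{\pm 1},W(\chi)^{\pm 2}$ whose character averages you would need to control via Gauss sums with square-root cancellation, a substantial unproven step the paper never requires, and (b) symmetrizes the weight multiplicatively in each variable, giving something of the shape $\tfrac14\big(x^k+(z-x)^k\big)\big(y^k+(z-y)^k\big)$ rather than $x^ky^k+(z-x)^k(z-y)^k=f_k(x,y,z)$, so it would not produce the stated integrand. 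Finally, you never separate odd from even characters: the functional equation you quote holds only for odd $\chi$, while the even characters form a proportion $\tfrac{1}{q-1}$ of all characters (not negligible as $\degree Q\to\infty$ with $q$ fixed) and must be treated via $\hat L(s,\chi)=(q^{1-s}-1)L(s,\chi)$; they supply the $q^{-1}$ piece of the constant $1-q^{-1}=(1-2q^{-1})+q^{-1}$, so omitting this case changes the leading coefficient.
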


These three results are extensions of Tamam's work \cite{FourthMoment_Tamam}, where she proves the three theorems above for the cases where $k,l=0$. In the number field setting, Conrey, Rubinstein, and Snaith \cite{MomDerivCharPolyAppRZF_ConreyRubinsteinSnaith} conjectured using random matrix theory that
\begin{align*}
\frac{1}{T} \int_{t=0}^{T} \Big\lvert \zeta ' \Big( \frac{1}{2}+it \Big) \Big\rvert^{2k} \mathrm{d}t \sim a_k b_k (\log T)^{k^2 + 2k} ,
\end{align*}
where
\begin{align*}
a_k := \prod_{p \text{ prime}} \Big( 1- \frac{1}{p} \Big)^{k^2} \sum_{m=0}^{\infty} \bigg( \frac{\Gamma (m+k)}{m! \Gamma (k)} \bigg)^2 p^{-m}
\end{align*}
and values for $b_1 , b_2 , \ldots , b_{15}$ are explicitely given. In particular,
\begin{align*}
b_1 = &\frac{1}{3} , \\
b_2 = &\frac{61}{2^5 \cdot 3^2 \cdot 5 \cdot 7} .
\end{align*}
Notice the similarity between these conjectures and the corresponding special cases of our results:
\begin{align*}
\frac{1}{\phi (Q)} \sum_{\substack{\chi \modulus Q \\ \chi \neq \chi_0}} \bigg\lvert L' \bigg( \frac{1}{2} , \chi \bigg) \bigg\rvert^2
\sim (\log q)^{2} \frac{1}{3} (\degree Q)^{3}
\end{align*}
and
\begin{align*}
\frac{1}{\phi (Q)} \sum_{\substack{\chi \modulus Q \\ \chi \neq \chi_0}} \bigg\lvert L' \bigg( \frac{1}{2} , \chi \bigg) \bigg\rvert^4
\sim (\log q)^{4} (1-q^{-1}) \frac{61}{2^5 \cdot 3^2 \cdot 5 \cdot 7} (\degree Q)^{6} .
\end{align*}

Furthermore, we prove the following result:

\begin{theorem} \label{Fourth moment coefficients asymptotic behaviour}
For all non-negative inetgers $m$ we define
\begin{align}
\begin{split} \label{4th moment, mth derivative main coeff limit}
D_m := &\lim_{\degree Q \longrightarrow \infty} \frac{1}{(1-q^{-1}) (\log q)^{4m}} \frac{1}{\phi (Q)} \frac{1}{(\degree Q)^{4m + 4}} \sum_{\substack{\chi \modulus Q \\ \chi \neq \chi_0}} \Big\lvert L^{(m)} \Big( \frac{1}{2} , \chi \Big) \Big\rvert^{4} \\
= &\int_{\substack{a_1 , a_2 , a_3 , a_4 \geq 0 \\ 2a_1 + a_3 + a_4 < 1 \\ 2a_2 + a_3 + a_4 < 1}} \Big( (a_1 + a_3 )^m (a_1 + a_4 )^m + (1- a_1 - a_3 )^m (1 - a_1 - a_4 )^m \Big) \\
&\quad \quad \quad \quad \quad \quad \quad \quad \quad \cdot \Big( (a_2 + a_3 )^m (a_2 + a_4 )^m + (1- a_2 - a_3 )^m (1 - a_2 - a_4 )^m \Big) \mathrm{d} a_1 \mathrm{d} a_2 \mathrm{d} a_3 \mathrm{d} a_4 .
\end{split}
\end{align}
We have that
\begin{align*}
D_m \sim \frac{1}{16 m^4}
\end{align*}
as $m \longrightarrow \infty$.
\end{theorem}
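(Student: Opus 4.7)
The plan is to expand the integrand into four pieces,
\begin{align*}
D_m = T_1 + T_2 + T_3 + T_4 ,
\end{align*}
where $T_1$ is the contribution of $(a_1+a_3)^m(a_1+a_4)^m(a_2+a_3)^m(a_2+a_4)^m$, $T_4$ is the contribution of $(1-a_1-a_3)^m(1-a_1-a_4)^m(1-a_2-a_3)^m(1-a_2-a_4)^m$, and $T_2, T_3$ are the two cross terms, with $T_2 = T_3$ by the symmetry $a_1 \leftrightarrow a_2$. I expect only $T_4$ to contribute at order $m^{-4}$; the other three should decay exponentially in $m$.

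\textbf{Step 1 (exponential control of $T_1, T_2, T_3$).} Since the integration region is contained in $[0, 1/2]^2 \times [0,1]^2$ and so has finite volume, it suffices to bound each integrand pointwise. For $T_1$, the AM--GM inequality against the constraint gives $(a_1+a_3)(a_1+a_4) \leq \big( (2a_1+a_3+a_4)/2 \big)^2 < 1/4$, and similarly for the $a_2$ pair, so the $T_1$-integrand is at most $16^{-m}$. For $T_2$, monotonicity in $a_2$ reduces the optimisation to $a_2 = 0$, after which parametrising the boundary $2a_1+a_3+a_4 = 1$ by $a_1 = (1-a_3-a_4)/2$ gives an integrand bounded above by $\frac{(1-(a_3-a_4)^2)(1-a_3)(1-a_4)}{4} \leq 1/4$, with the supremum attained at $(a_1, a_2, a_3, a_4) = (1/2, 0, 0, 0)$. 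Consequently $T_1 + T_2 + T_3 = O(4^{-m}) = o(m^{-4})$.

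\textbf{Step 2 (main term from $T_4$).} Substitute $b_i = m a_i$ to rewrite
\begin{align*}
T_4 = \frac{1}{m^4} \int_{R_m} \bigg( 1 - \frac{b_1+b_3}{m} \bigg)^m \bigg( 1 - \frac{b_1+b_4}{m} \bigg)^m \bigg( 1 - \frac{b_2+b_3}{m} \bigg)^m \bigg( 1 - \frac{b_2+b_4}{m} \bigg)^m \, \mathrm{d} b_1 \mathrm{d} b_2 \mathrm{d} b_3 \mathrm{d} b_4 ,
\end{align*}
where $R_m = \{ b_i \geq 0,\ 2b_1+b_3+b_4 < m,\ 2b_2+b_3+b_4 < m \}$ exhausts the positive orthant as $m \to \infty$. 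The elementary inequality $(1 - x/m)^m \leq e^{-x}$ for $0 \leq x \leq m$ shows that the integrand, extended by zero off $R_m$, is uniformly dominated by the integrable function $e^{-2(b_1+b_2+b_3+b_4)}$, while pointwise the integrand converges to that same function. Dominated convergence then yields
\begin{align*}
m^4 T_4 \longrightarrow \int_0^\infty \int_0^\infty \int_0^\infty \int_0^\infty e^{-2(b_1+b_2+b_3+b_4)} \, \mathrm{d} b_1 \mathrm{d} b_2 \mathrm{d} b_3 \mathrm{d} b_4 = \frac{1}{16} ,
\end{align*}
so $D_m = \frac{1}{16 m^4}(1 + o(1))$, which is the desired asymptotic.

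The only genuinely delicate step is the uniform sup-bound for the $T_2$-integrand in Step~1, which amounts to maximising a product of four linear forms (with mixed signs) over a $4$-dimensional polytope and requires some case analysis; everything else is a clean scaling argument combined with dominated convergence.
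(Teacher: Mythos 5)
Your proposal is correct and follows essentially the same route as the paper: the same four-term expansion of the integrand, exponential bounds on the three non-dominant terms via the constraint $2a_i+a_3+a_4<1$ (the paper bounds the cross term by maximising $(a_2+a_3)(a_2+a_4)$ at $1/4$, matching your $O(4^{-m})$), and the same rescaling $a_i \mapsto a_i/m$ for the dominant term. The only cosmetic difference is that you conclude with dominated convergence against $e^{-2(b_1+b_2+b_3+b_4)}$, whereas the paper runs an explicit two-sided squeeze using the bound $(1-x/m)^m \geq e^{-x}e^{-4/(m^{1/3}-2m^{-1/3})}$ on $[0,2m^{1/3}]$; both yield the limit $1/16$.
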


We note the similarity between our result and a result of Conrey's \cite{FourthMoment_Conrey}, which states that
\begin{align*}
\frac{\pi^2}{6} C_{2,m} \sim \frac{1}{16 m^4}
\end{align*}
as $m \longrightarrow \infty$, where
\begin{align*}
C_{2,m}
= \lim_{T \longrightarrow \infty} T^{-1} \bigg( \log \Big(\frac{T}{2 \pi} \Big) \bigg)^{-4m -4} \int_{t=1}^{T} \Big\lvert \zeta^{(m)} \Big( \frac{1}{2} +it \Big) \Big\rvert^{4} \mathrm{d} t .
\end{align*}
Note that the factor of $\zeta (2) = \frac{ \pi^2 }{6}$ in Conrey's result corresponds to the factor of $\frac{1}{\zeta_{\mathcal{A}} (2)} = \frac{1}{1 - q^{-1}}$ in our definition of $D_m$.


\section{Preliminary Results}

The following results are well known and, for many, the proofs can be found in Rosen's book \cite{NumTheoFuncField_Rosen_2002}.

\begin{lemma} \label{Dir Char Sum Over Mult Group}
Let $\chi $ be a non-trivial Dirichlet character modulo $R$. Then, 
\begin{align*} 
\sum_{A \in \mathcal{A}/R\mathcal{A}} \chi (A)
= \sum_{A \in (\mathcal{A}/R\mathcal{A})^*} \chi (A)
= 0 .
\end{align*}
\end{lemma}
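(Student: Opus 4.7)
The plan is to use a standard orbit/averaging argument familiar from the number field theory of Dirichlet characters, transplanted to the polynomial ring setting.

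First, I would dispose of the equality $\sum_{A \in \mathcal{A}/R\mathcal{A}} \chi(A) = \sum_{A \in (\mathcal{A}/R\mathcal{A})^*} \chi(A)$ by invoking property (3) in the definition of a Dirichlet character: any residue class $A \in \mathcal{A}/R\mathcal{A}$ with $(A,R) \neq 1$ contributes $\chi(A) = 0$ to the left-hand sum, so only the classes in $(\mathcal{A}/R\mathcal{A})^*$ contribute. This reduction is immediate and purely a bookkeeping step.

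The substantive content is showing that $S := \sum_{A \in (\mathcal{A}/R\mathcal{A})^*} \chi(A) = 0$. Since $\chi$ is non-trivial, there exists $B \in (\mathcal{A}/R\mathcal{A})^*$ with $\chi(B) \neq 1$; here I use the fact that $\chi$ is trivial on $(\mathcal{A}/R\mathcal{A})^*$ iff $\chi = \chi_0$ (any $A$ with $(A,R) = 1$ has $\chi(A) \neq 0$, and by multiplicativity the values lie in the group of roots of unity, so triviality on coprime residues forces $\chi = \chi_0$). Then I would compute
\begin{align*}
\chi(B) \, S
= \sum_{A \in (\mathcal{A}/R\mathcal{A})^*} \chi(B) \chi(A)
= \sum_{A \in (\mathcal{A}/R\mathcal{A})^*} \chi(BA)
= \sum_{A' \in (\mathcal{A}/R\mathcal{A})^*} \chi(A')
= S,
\end{align*}
where the second equality uses multiplicativity (property (2)) and the third uses the bijection $A \mapsto BA$ of the group $(\mathcal{A}/R\mathcal{A})^*$ to itself (which makes sense precisely because we sum over the units, which is why the first reduction above is needed). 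Rearranging gives $(\chi(B) - 1) S = 0$, and since $\chi(B) \neq 1$ we conclude $S = 0$.

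I expect no real obstacle: the only subtlety is remembering that the multiplication-by-$B$ trick requires a group structure, which is why one must first pass to $(\mathcal{A}/R\mathcal{A})^*$ before attempting the substitution. Everything else is formal.
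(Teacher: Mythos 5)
Your proof is correct and is the standard averaging argument (multiplying the sum by $\chi(B)$ for a unit $B$ with $\chi(B) \neq 1$ and using the bijection $A \mapsto BA$ on $(\mathcal{A}/R\mathcal{A})^*$); the paper itself gives no proof, simply citing this as a well-known fact from Rosen's book, and your argument is exactly the classical one being referenced.
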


\begin{lemma} \label{Sum of chi(A) over base field}
Let $\chi $ be an an odd Dirichlet character. Then,
\begin{align*}
\sum_{a \in {\mathbb{F}_q}} \chi (a)
= \sum_{a \in {\mathbb{F}_q}^*} \chi (a)
= 0 .
\end{align*}
\end{lemma}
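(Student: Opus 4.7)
The plan is to mimic the standard orthogonality-style proof that a non-trivial character sums to zero over its group, but applied to the restriction of $\chi$ to $\mathbb{F}_q^*$, where the hypothesis that $\chi$ is odd is precisely what guarantees this restriction is non-trivial.

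First I would observe that the two sums in the statement are equal. The modulus $R$ of $\chi$ must be a non-unit (otherwise $\chi$ is the modulus-$1$ character and cannot be odd), so $(0,R) = R \neq 1$ and hence $\chi(0) = 0$ by the third property in the definition of a Dirichlet character. Thus $\sum_{a \in \mathbb{F}_q} \chi(a) = \sum_{a \in \mathbb{F}_q^*} \chi(a)$, and only the second sum needs attention. Note also that every $a \in \mathbb{F}_q^*$ is a unit in $\mathcal{A}$, so $(a,R)=1$ and $\chi(a) \neq 0$.

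Next, since $\chi$ is odd, by definition there exists some $b \in \mathbb{F}_q^*$ with $\chi(b) \neq 1$. The map $a \longmapsto ba$ is a bijection of $\mathbb{F}_q^*$ onto itself, so using complete multiplicativity of $\chi$ I would write
\begin{align*}
\chi (b) \sum_{a \in \mathbb{F}_q^*} \chi (a)
= \sum_{a \in \mathbb{F}_q^*} \chi (ba)
= \sum_{a' \in \mathbb{F}_q^*} \chi (a') .
\end{align*}
Rearranging gives $(\chi(b) - 1) \sum_{a \in \mathbb{F}_q^*} \chi (a) = 0$, and since $\chi(b) - 1 \neq 0$ the sum must vanish.

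There is essentially no obstacle here; this is a two-line orthogonality argument. The only small point to verify carefully is that the hypothesis ``$\chi$ is odd'' produces an element $b \in \mathbb{F}_q^*$ with $\chi(b) \neq 1$ (which is immediate from the definition given in the paper) and that such a $b$ is automatically coprime to $R$ so that $\chi(b)$ is a genuine nonzero root of unity, making the cancellation $\chi(b) - 1 \neq 0$ legitimate.
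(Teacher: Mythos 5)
Your proof is correct and is exactly the standard translation argument one would expect: picking $b \in \mathbb{F}_q^*$ with $\chi(b) \neq 1$ (which exists precisely because $\chi$ is odd) and using the bijection $a \mapsto ba$ of $\mathbb{F}_q^*$. The paper does not actually supply a proof of this lemma --- it is listed among the well-known preliminaries with a reference to Rosen's book --- so there is nothing to compare against, but your argument, including the careful observations that $\chi(0)=0$ and that $\chi(b)$ is a nonzero root of unity, is complete and is the intended one.
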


\begin{lemma} \label{Sum of chi(A) over all chi modulus R}
Let $R \in \mathcal{M}$. Then,
\begin{align*}
\sum_{\chi \modulus R} \chi (A) 
= \begin{cases}
\phi (R) &\text{ if $A \equiv 1 (\modulus R)$} \\
0 &\text{ otherwise} ,
\end{cases}
\end{align*}
and
\begin{align*}
\sum_{\substack{\chi \modulus R \\ \chi \text{ even}}} \chi (A) 
= \begin{cases}
\frac{\phi (R)}{q-1} &\text{ if $A \equiv a (\modulus R)$ for some $a \in (\mathbb{F}_q)^*$} \\
0 &\text{ otherwise} .
\end{cases} 
\end{align*}
\end{lemma}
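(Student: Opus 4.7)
The plan is to derive both identities from the standard orthogonality relations for characters of a finite abelian group, applied to $G := (\mathcal{A}/R\mathcal{A})^*$ and then to its quotient $G/\mathbb{F}_q^*$. First, if $(A,R) \neq 1$ then $\chi(A) = 0$ for every Dirichlet character $\chi$ modulo $R$, so both sums equal $0$; moreover such an $A$ cannot satisfy $A \equiv 1 \,(\modulus R)$ (since $(1,R) = 1$) nor $A \equiv a \,(\modulus R)$ for any $a \in \mathbb{F}_q^*$ (since every such $a$ is a unit of $\mathcal{A}$, hence coprime to $R$), so the ``otherwise'' value in each case is consistent. This reduces the claim to the case $(A,R) = 1$, in which case $A$ determines an element of $G$, and the Dirichlet characters modulo $R$ are precisely the $\phi(R) = |G|$ elements of $\widehat G$.

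For the first identity, if $A \equiv 1 \,(\modulus R)$ then $A$ is the identity in $G$, so every $\chi(A) = 1$ and the sum equals $\phi(R)$. Otherwise $A$ is a non-identity element of $G$; because characters of a finite abelian group separate points, there exists a Dirichlet character $\psi$ modulo $R$ with $\psi(A) \neq 1$. Since $\chi \mapsto \psi\chi$ permutes the set of characters modulo $R$,
\[
S := \sum_{\chi \modulus R} \chi(A) = \sum_{\chi \modulus R} (\psi\chi)(A) = \psi(A)\, S,
\]
which forces $S = 0$.

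For the second identity, I would use that a Dirichlet character modulo $R$ is even precisely when its restriction to the subgroup $\mathbb{F}_q^* \hookrightarrow G$ is trivial, equivalently when it factors through the quotient $\widetilde G := G/\mathbb{F}_q^*$. The even characters modulo $R$ are therefore in natural bijection with $\widehat{\widetilde G}$, which has $|\widetilde G| = \phi(R)/(q-1)$ elements. Repeating the previous paragraph with $\widetilde G$ in place of $G$: if $A \equiv a \,(\modulus R)$ for some $a \in \mathbb{F}_q^*$ then the image of $A$ in $\widetilde G$ is trivial, every even $\chi$ gives $\chi(A) = 1$, and the sum equals $\phi(R)/(q-1)$; otherwise the image of $A$ in $\widetilde G$ is non-trivial and the same averaging argument, applied with a separating even character, forces the sum to vanish. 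The only genuine input is the separation-of-points property for characters of a finite abelian group, which is classical and follows from the structure theorem; everything else is straightforward bookkeeping, and an alternative route is simply to quote the corresponding statement from Rosen's book, as the paper already does for the analogous Lemma~\ref{Dir Char Sum Over Mult Group}.
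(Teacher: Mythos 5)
Your proof is correct: the paper gives no argument for this lemma at all (it is listed among the preliminary results whose proofs are referred to Rosen's book), and what you have written is precisely the standard orthogonality argument one finds there — separation of points plus the averaging trick $S = \psi(A)S$ for the full group $(\mathcal{A}/R\mathcal{A})^*$, and the same applied to the quotient by $\mathbb{F}_q^*$ for the even characters. The only implicit assumption worth noting is $\degree R \geq 1$, so that $\mathbb{F}_q^*$ injects into $(\mathcal{A}/R\mathcal{A})^*$ and the count $\phi(R)/(q-1)$ makes sense; the paper makes the same tacit assumption.
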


\begin{corollary} \label{Sum of chi(A) inv-chi(B) over chi mod R}
Let $R \in \mathcal{M}$. Then,
\begin{align*}
\sum_{\chi \modulus R} \chi (A) \charinv\chi (B)
= \begin{cases}
\phi (R) &\text{ if $(AB,R)=1$ and $A \equiv B (\modulus R)$} \\
0 &\text{ otherwise} ,
\end{cases} 
\end{align*}
and
\begin{align*}
\sum_{\substack{\chi \modulus R \\ \chi \text{ even}}} \chi (A) \charinv\chi (B)
= \begin{cases}
\frac{\phi (R)}{q-1} &\text{ if $(AB,R)=1$ and $A \equiv aB (\modulus R)$ for some $a \in (\mathbb{F}_q)^*$} \\
0 &\text{ otherwise} .
\end{cases} 
\end{align*}
\end{corollary}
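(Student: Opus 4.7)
The plan is to reduce the corollary directly to the preceding Lemma \ref{Sum of chi(A) over all chi modulus R} by using multiplicativity of $\chi$ together with the identity $\charinv{\chi}(B) = \chi(B^{-1})$, valid whenever $(B,R)=1$.

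First, I would dispose of the degenerate case: suppose $(AB,R) \neq 1$. Then at least one of $(A,R)$ or $(B,R)$ is not $1$, and by the defining property (3) of a Dirichlet character, either $\chi(A) = 0$ for every $\chi \modulus R$ or $\chi(B) = 0$ for every $\chi \modulus R$; in either situation $\chi(A)\charinv{\chi}(B) = 0$ term by term, so both sums vanish. This matches the ``otherwise'' branch of the claimed formulas (when $A \not\equiv B \pmod R$ for the coprime case, there is nothing to prove either way).

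Next, assume $(AB,R) = 1$, so both $A$ and $B$ are units in $\mathcal{A}/R\mathcal{A}$. Since the restriction of $\chi$ to $(\mathcal{A}/R\mathcal{A})^*$ is a group homomorphism into $\mathbb{C}^*$ with finite image, its values are roots of unity, and therefore $\charinv{\chi(B)} = \chi(B)^{-1} = \chi(B^{-1})$. Combining this with multiplicativity gives
\begin{align*}
\chi(A)\charinv{\chi}(B) = \chi(A)\chi(B^{-1}) = \chi(AB^{-1}),
\end{align*}
so summing over $\chi \modulus R$ reduces the left-hand side to $\sum_{\chi \modulus R} \chi(AB^{-1})$. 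Applying the first formula of Lemma \ref{Sum of chi(A) over all chi modulus R}, this equals $\phi(R)$ if $AB^{-1} \equiv 1 \pmod R$, i.e.\ $A \equiv B \pmod R$, and $0$ otherwise. That yields the first assertion.

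The second assertion follows identically: summing only over even $\chi$, the same reduction gives $\sum_{\chi \modulus R,\ \chi \text{ even}} \chi(AB^{-1})$, and the second formula of Lemma \ref{Sum of chi(A) over all chi modulus R} evaluates this to $\phi(R)/(q-1)$ precisely when $AB^{-1} \equiv a \pmod R$ for some $a \in \mathbb{F}_q^*$, equivalently $A \equiv aB \pmod R$ for some $a \in \mathbb{F}_q^*$, and to $0$ otherwise. There is no genuine obstacle here; the only point to verify carefully is the identity $\charinv{\chi}(B) = \chi(B^{-1})$ in the coprime case, which is immediate from the image of $\chi|_{(\mathcal{A}/R\mathcal{A})^*}$ lying on the unit circle.
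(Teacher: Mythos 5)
Your proof is correct and is exactly the intended derivation: the paper states this as an immediate corollary of Lemma \ref{Sum of chi(A) over all chi modulus R} without writing out a proof, and the reduction via $\chi(A)\charinv{\chi}(B) = \chi(AB^{-1})$ for $(AB,R)=1$ (with the degenerate case vanishing term by term) is the standard argument it has in mind.
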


\begin{proposition} \label{Analytic continuation and Euler product of function field zeta}
We have that
\begin{align*}
\zeta_{\mathcal{A}} (s)
= \frac{1}{1-q^{1-s}} ,
\end{align*}
and this gives an analytic continuation of $\zeta_{\mathcal{A}}$ to $\mathbb{C} \backslash \{ 1 + \frac{2n \pi i}{\log q} : n \in \mathbb{Z} \}$. We also have the following Euler product for $\Re (s) >1$:
\begin{align*}
\zeta_{\mathcal{A}} (s)
= \prod_{P \in \mathcal{P}} \Big( 1 - \lvert P \rvert^{-s} \Big)^{-1} .
\end{align*}
\end{proposition}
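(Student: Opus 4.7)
The plan is to prove all three claims by a direct computation from the definition. First, I would compute $\zeta_{\mathcal{A}}(s)$ by grouping monic polynomials according to degree: since the leading coefficient of a monic polynomial of degree $n$ is fixed at $1$ and the remaining $n$ coefficients are free in $\mathbb{F}_q$, we have $\lvert \mathcal{M}_n \rvert = q^n$, and every such polynomial has norm $q^n$. Hence, for $\Re(s) > 1$ (so that absolute convergence holds),
\begin{align*}
\zeta_{\mathcal{A}}(s)
= \sum_{n=0}^{\infty} \sum_{A \in \mathcal{M}_n} \lvert A \rvert^{-s}
= \sum_{n=0}^{\infty} q^n \cdot q^{-ns}
= \sum_{n=0}^{\infty} \big( q^{1-s} \big)^n
= \frac{1}{1-q^{1-s}} .
\end{align*}

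Next, I would observe that the right-hand side is a rational function in the entire function $q^{1-s} = e^{(1-s)\log q}$, hence meromorphic on all of $\mathbb{C}$, and so provides the claimed analytic continuation to every $s$ at which the denominator does not vanish. The condition $1 - q^{1-s} = 0$ is equivalent to $(1-s)\log q \in 2\pi i \mathbb{Z}$, i.e.\ to $s = 1 + \frac{2n\pi i}{\log q}$ for some $n \in \mathbb{Z}$, which identifies exactly the removed set.

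For the Euler product, I would exploit that $\mathcal{A} = \mathbb{F}_q[t]$ is a UFD whose units are $\mathbb{F}_q^*$, so each $A \in \mathcal{M}$ admits a unique factorisation as a product of monic primes (with multiplicity). For $\Re(s) > 1$ the series $\sum_{P \in \mathcal{P}} \lvert P \rvert^{-s}$ is majorised by $\zeta_{\mathcal{A}}(\Re(s)) < \infty$, which justifies the absolute convergence of the infinite product and the rearrangement
\begin{align*}
\prod_{P \in \mathcal{P}} \big( 1 - \lvert P \rvert^{-s} \big)^{-1}
= \prod_{P \in \mathcal{P}} \sum_{k=0}^{\infty} \lvert P \rvert^{-ks}
= \sum_{A \in \mathcal{M}} \lvert A \rvert^{-s}
= \zeta_{\mathcal{A}}(s),
\end{align*}
where in the middle equality one collects terms across primes by the monic polynomial they build.

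There is essentially no serious obstacle here: the whole proposition reduces to the degree-counting identity $\lvert \mathcal{M}_n \rvert = q^n$ together with unique factorisation in $\mathbb{F}_q[t]$. The only care needed is bookkeeping of the pole set (verifying that the zeros of $1-q^{1-s}$ are exactly those $s$ listed) and confirming absolute convergence when expanding the Euler product — both are routine.
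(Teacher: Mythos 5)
Your proof is correct and is exactly the standard argument (the paper itself omits the proof, deferring to Rosen's book, where this same degree-count $\lvert \mathcal{M}_n \rvert = q^n$ plus geometric series, followed by unique factorisation into monic primes for the Euler product, is the proof given). No issues.
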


Proposition \ref{Analytic continuation and Euler product of function field zeta} can be generalised to the following:

\begin{proposition}
Let $R \in \mathcal{M}$ and let $\chi$ be a Dirichlet character of modulus $R$. If $\chi = \chi_0$ then we have
\begin{align*}
L (s, \chi_0)
= \Bigg( \prod_{P \mid R} 1+ \lvert P \rvert^{-s} \Bigg) \zeta_{\mathcal{A}} (s) .
\end{align*}
If $\chi \neq \chi_0$ then we have
\begin{align*}
L (s, \chi)
= \sum_{\substack{A \in \mathcal{M} \\ \degree A < \degree R}} \frac{\chi (A)}{\lvert A \rvert^s} .
\end{align*}
We can now see how the analytic continuations given in the introduction are obtained.
\end{proposition}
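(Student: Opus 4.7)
The plan is to treat the two cases of the proposition separately; each reduces to a short manipulation based on results already available in the preliminary section.

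For the trivial character $\chi_0$ modulo $R$, I would begin from the Euler product expansion of $L(s,\chi_0)$ on $\Re(s)>1$. Since $\chi_0$ is completely multiplicative with $\chi_0(P)=1$ for $P \nmid R$ and $\chi_0(P)=0$ for $P \mid R$, absolute convergence of the defining Dirichlet series gives
\[
L(s,\chi_0) \;=\; \prod_{P \in \mathcal{P}} \bigl(1-\chi_0(P)|P|^{-s}\bigr)^{-1} \;=\; \prod_{P \nmid R}\bigl(1-|P|^{-s}\bigr)^{-1}.
\]
Comparing with the full Euler product $\zeta_{\mathcal{A}}(s) = \prod_{P}(1-|P|^{-s})^{-1}$ from Proposition \ref{Analytic continuation and Euler product of function field zeta} immediately yields the stated identity on $\Re(s)>1$. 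The analytic continuation to $\mathbb{C}\setminus\{1+\tfrac{2\pi i n}{\log q}:n\in\mathbb{Z}\}$ is then inherited from $\zeta_{\mathcal{A}}$, since the prefactor $\prod_{P \mid R}(1 - |P|^{-s})$ is a finite product of exponential polynomials in $s$ and hence entire.

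For the non-trivial case, the strategy is to show that the Dirichlet series for $L(s,\chi)$ \emph{terminates} at degree $\deg R - 1$, i.e.\ every contribution from $\deg A \geq \deg R$ cancels by orthogonality. Writing $L(s,\chi)=\sum_{n \geq 0} q^{-ns}\sum_{A \in \mathcal{M}_n}\chi(A)$ on $\Re(s)>1$, the key step is to verify that $\sum_{A \in \mathcal{M}_n}\chi(A) = 0$ for every $n \geq \deg R$. For such $n$, polynomial division by $R$ furnishes a bijection
\[
\mathcal{M}_n \;\longleftrightarrow\; \mathcal{M}_{n-\deg R} \times (\mathcal{A}/R\mathcal{A}), \qquad A \;\longmapsto\; (B, C),
\]
where $A = BR + C$ with $\deg C < \deg R$; here $B$ is automatically monic of degree $n - \deg R$ because $A$ and $R$ are both monic. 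Since $\chi(A) = \chi(C)$ depends only on the residue class of $A$ modulo $R$, this bijection yields
\[
\sum_{A \in \mathcal{M}_n} \chi(A) \;=\; q^{n-\deg R}\!\!\sum_{C \in \mathcal{A}/R\mathcal{A}} \chi(C) \;=\; 0,
\]
the final equality by Lemma \ref{Dir Char Sum Over Mult Group}, using that $\chi$ is non-trivial. What survives is the claimed finite sum over $A \in \mathcal{M}$ with $\deg A < \deg R$. This expression is a polynomial in $q^{-s}$, hence entire in $s$, which is precisely the analytic continuation to $\mathbb{C}$ mentioned in the definition of $L(s,\chi)$.

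There is no genuine obstacle here; the only non-routine ingredient is the division-with-remainder observation, which relies crucially on $R$ being monic so that the quotient $B$ is monic. Everything else is formal manipulation of Euler products in the trivial case and a finite Dirichlet sum in the non-trivial case, with no estimates or limiting arguments required.
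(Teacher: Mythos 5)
Your proof is correct, and it is the standard argument; the paper itself offers no proof of this proposition, merely grouping it with the preliminary results attributed to Rosen's book, so there is nothing to compare against beyond noting that your route (Euler product for the trivial character, vanishing of $L_n(\chi)$ for $n \geq \degree R$ via division with remainder and Lemma \ref{Dir Char Sum Over Mult Group} for the non-trivial one) is exactly what one would expect. The division-with-remainder bijection is handled properly, including the observation that $B$ is monic of degree $n - \degree R$ and that the remainders $C$ run over a complete residue system modulo $R$. One point you should make explicit rather than pass over: your (correct) computation yields
\begin{align*}
L(s,\chi_0) = \zeta_{\mathcal{A}}(s) \prod_{P \mid R} \bigl(1 - \lvert P \rvert^{-s}\bigr),
\end{align*}
with a \emph{minus} sign, whereas the proposition as printed has $1 + \lvert P \rvert^{-s}$. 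The printed version is a typo in the paper (a direct check with $R = t$ confirms the minus sign), so you are right to derive the minus-sign identity, but you should not silently refer to it as ``the stated identity''; flag the discrepancy.
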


\begin{definition} \label{Definition, L_n ( chi )}
We will often write
\begin{align*}
L (s, \chi ) = \sum_{n=0}^{\infty} L_n ( \chi ) (q^{-s} )^n,
\end{align*}
where
\begin{align*}
L_n ( \chi ) := \sum_{\substack{A \in \mathcal{M} \\ \degree A = n}} \chi (A) .
\end{align*}
As shown above, if $\chi$ is a non-trivial character of modulus $R$, then $L_n (\chi ) = 0$ for $n \geq \degree R$.
\end{definition}

\begin{lemma} \label{ZetaCoprimeDoubleSum}
For $\Re (r) , \Re (s) > 1$, we have that
\begin{align*}
\sum_{\substack{R,S \in \mathcal{M} \\ (R,S)=1}} \frac{1}{\lvert R \rvert^r \lvert S \rvert^s }
= \Big( \sum_{R \in \mathcal{M}} \frac{1}{\lvert R \rvert^r } \Big) \Big( \sum_{S \in \mathcal{M}} \frac{1}{\lvert S \rvert^s } \Big) \Big( 1 - q^{1-s-r} \Big) .
\end{align*}
\end{lemma}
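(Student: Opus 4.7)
The plan is to remove the coprimality constraint via M\"{o}bius inversion over the gcd, which factorises the double sum into the product of two independent sums over $\mathcal{M}$ times $1/\zeta_{\mathcal{A}} (r+s)$. Using the standard identity $\sum_{D \mid (R,S)} \mu (D) = 1$ if $(R,S)=1$ and $0$ otherwise, I would insert this into the left-hand side and interchange the order of summation (legitimate by absolute convergence for $\Re (r), \Re (s) > 1$) to obtain
\begin{align*}
\sum_{\substack{R,S \in \mathcal{M} \\ (R,S)=1}} \frac{1}{\lvert R \rvert^r \lvert S \rvert^s}
= \sum_{D \in \mathcal{M}} \mu (D) \sum_{\substack{R,S \in \mathcal{M} \\ D \mid R, \, D \mid S}} \frac{1}{\lvert R \rvert^r \lvert S \rvert^s} .
\end{align*}

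Next, for each fixed $D$ I would parametrise $R = DR'$ and $S = DS'$ with $R', S' \in \mathcal{M}$, so that $\lvert R \rvert = \lvert D \rvert \lvert R' \rvert$ and similarly for $S$. The inner sum then becomes $\lvert D \rvert^{-r-s} \zeta_{\mathcal{A}} (r) \zeta_{\mathcal{A}} (s)$, and factoring these out of the $D$-sum leaves $\sum_{D \in \mathcal{M}} \mu (D) \lvert D \rvert^{-(r+s)} = 1/\zeta_{\mathcal{A}} (r+s)$, the standard function-field analogue of the Dirichlet series for $1/\zeta$, valid since $\Re (r+s) > 2 > 1$. Finally, I would apply Proposition \ref{Analytic continuation and Euler product of function field zeta} in the form $1/\zeta_{\mathcal{A}} (r+s) = 1 - q^{1-r-s}$, which together with $\zeta_{\mathcal{A}} (r) = \sum_{R \in \mathcal{M}} \lvert R \rvert^{-r}$ and the analogous identity for $s$ yields exactly the right-hand side of the lemma.

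An equivalent, equally short route is to compare Euler products prime-by-prime: coprimality forces at most one of $R, S$ to be divisible by any given prime $P$, so the local factor at $P$ is
\begin{align*}
1 + \sum_{n \geq 1} \lvert P \rvert^{-nr} + \sum_{n \geq 1} \lvert P \rvert^{-ns}
= \frac{1 - \lvert P \rvert^{-r-s}}{(1-\lvert P \rvert^{-r})(1-\lvert P \rvert^{-s})} ,
\end{align*}
and taking the product over $P \in \mathcal{P}$ recovers $\zeta_{\mathcal{A}} (r) \zeta_{\mathcal{A}} (s) / \zeta_{\mathcal{A}} (r+s)$, which is the claimed identity by the same application of Proposition \ref{Analytic continuation and Euler product of function field zeta}. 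There is no serious obstacle here; the only care needed is to check absolute convergence so that the rearrangements above are justified, and this is immediate from the hypotheses $\Re (r), \Re (s) > 1$.
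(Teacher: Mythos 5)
Your proof is correct, and both of your routes differ from the one in the paper, though the first is closely related to it. The paper does not use M\"{o}bius inversion at all: it partitions the unrestricted double sum according to the exact value of the gcd, writing
\begin{align*}
\sum_{R,S \in \mathcal{M}} \frac{1}{\lvert R \rvert^r \lvert S \rvert^s}
= \sum_{G \in \mathcal{M}} \frac{1}{\lvert G \rvert^{r+s}} \sum_{\substack{R,S \in \mathcal{M} \\ (R,S)=1}} \frac{1}{\lvert R \rvert^r \lvert S \rvert^s} ,
\end{align*}
and then solves for the coprime sum by dividing by $\zeta_{\mathcal{A}}(r+s)$. Your first argument is precisely the dual of this: instead of expressing the full sum in terms of the coprime one and inverting by division, you sieve the coprimality condition directly with $\sum_{D \mid (R,S)} \mu(D)$ and land on $\zeta_{\mathcal{A}}(r)\zeta_{\mathcal{A}}(s)/\zeta_{\mathcal{A}}(r+s)$ without having to solve an equation. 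What the paper's version buys is that it never needs the M\"{o}bius function or the identity $\sum_D \mu(D)\lvert D \rvert^{-u} = 1/\zeta_{\mathcal{A}}(u)$; what yours buys is that it is constructive (no division step to justify, though in either case the convergence issues are trivial for $\Re(r), \Re(s) > 1$) and generalises more readily to sums with extra weights attached to $R$ and $S$. Your second, Euler-product argument is a genuinely third route and is also correct: the local factor computation $1 + \sum_{n\geq 1}\lvert P\rvert^{-nr} + \sum_{n\geq 1}\lvert P\rvert^{-ns} = (1-\lvert P\rvert^{-r-s})/\big((1-\lvert P\rvert^{-r})(1-\lvert P\rvert^{-s})\big)$ checks out. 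All three proofs finish by the same observation that $1/\zeta_{\mathcal{A}}(r+s) = 1 - q^{1-r-s}$, which is where the distinctly function-field simplification enters.
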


\begin{proof}
We have that
\begin{align*}
\sum_{R,S \in \mathcal{M}} \frac{1}{\lvert R \rvert^r \lvert S \rvert^s }
= &\sum_{G \in \mathcal{M}} \sum_{\substack{R,S \in \mathcal{M} \\ (R,S) =G}} \frac{1}{\lvert R \rvert^r \lvert S \rvert^s }
= \sum_{G \in \mathcal{M}} \frac{1}{\lvert G \rvert^r \lvert G \rvert^s} \sum_{\substack{R,S \in \mathcal{M} \\ (R,S) =1}} \frac{1}{\lvert R \rvert^r \lvert S \rvert^s } \\
= &\bigg( \sum_{G \in \mathcal{M}} \frac{1}{\lvert G \rvert^{r+s}} \bigg) \bigg( \sum_{\substack{R,S \in \mathcal{M} \\ (R,S) =1}} \frac{1}{\lvert R \rvert^r \lvert S \rvert^s } \bigg) .
\end{align*}
From this we easily deduce that
\begin{align*}
\sum_{\substack{R,S \in \mathcal{M} \\ (R,S) =1}} \frac{1}{\lvert R \rvert^r \lvert S \rvert^s }
= \bigg( \sum_{R,S \in \mathcal{M}} \frac{1}{\lvert R \rvert^r \lvert S \rvert^s } \bigg) \bigg( \sum_{G \in \mathcal{M}} \frac{1}{\lvert G \rvert^{r+s}} \bigg)^{-1}
= \Big( \sum_{R \in \mathcal{M}} \frac{1}{\lvert R \rvert^r } \Big) \Big( \sum_{S \in \mathcal{M}} \frac{1}{\lvert S \rvert^s } \Big) \Big( 1 - q^{1-s-r} \Big) .
\end{align*}
\end{proof}


\section{First Moments} \label {First Moments}

To prove Theorem \ref{first_moment_theorem} we will require the following lemma.

\begin{lemma} \label{Lemma, sum of n^k q^(n/2) over n=0 to deg Q -1}
For all positive integers $k$ we have that
\begin{align*}
\sum_{n=0}^{\degree Q -1} n^k q^{\frac{n}{2}}
= \frac{1}{q^{\frac{1}{2}}-1} (\degree Q)^k \lvert Q \rvert^{\frac{1}{2}} + O_k \bigg( (\degree Q)^{k-1} \lvert Q \rvert^{\frac{1}{2}} \bigg),
\end{align*}
as $\degree Q \longrightarrow \infty$.
\end{lemma}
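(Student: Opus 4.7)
The plan is to reverse the order of summation so that the exponential weight $q^{n/2}$, which concentrates the mass near the top of the range, becomes a decaying factor. Writing $N = \degree Q$ and $x = q^{1/2}$, the substitution $n = N-1-m$ gives
\begin{align*}
\sum_{n=0}^{N-1} n^k q^{n/2}
= x^{N-1} \sum_{m=0}^{N-1} (N-1-m)^k x^{-m}.
\end{align*}
Since $x^{-m}$ now decays geometrically in $m$, the finite inner sum is controlled by the convergent series $\sum_{m=0}^{\infty} m^j x^{-m}$, and the main contribution comes from the small-$m$ terms.

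First I would expand $(N-1-m)^k = \sum_{j=0}^{k} \binom{k}{j} (N-1)^{k-j} (-m)^j$ by the binomial theorem and split the sum by $j$. The $j=0$ term evaluates to
\begin{align*}
(N-1)^k x^{N-1} \cdot \frac{1 - x^{-N}}{1 - x^{-1}}
= (N-1)^k \, \frac{x^N - 1}{x-1},
\end{align*}
and replacing $(N-1)^k$ by $N^k + O_k(N^{k-1})$ produces the proposed main term $\frac{N^k x^N}{x-1} = \frac{1}{q^{1/2}-1} (\degree Q)^k \lvert Q \rvert^{1/2}$, with the discrepancies $O_k(N^{k-1} x^N / (x-1))$ and $O_k(N^k / (x-1))$ both absorbed into $O_k(N^{k-1} \lvert Q \rvert^{1/2})$.

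For the remaining $j \geq 1$ contributions, I would crudely estimate
\begin{align*}
\sum_{m=0}^{N-1} m^j x^{-m} \leq \sum_{m=0}^{\infty} m^j x^{-m} =: C_j(x),
\end{align*}
which is bounded by a constant $C_j$ depending only on $j$, uniformly for $x \geq \sqrt{2}$ (i.e.\ $q \geq 2$). Multiplying by $\binom{k}{j}(N-1)^{k-j} x^{N-1}$ and summing over $j \in \{1,\dots,k\}$ contributes at most $O_k(N^{k-1} x^N)$, giving the stated error. No step is genuinely deep; the only mild care needed is to verify that the implicit constants $\tfrac{1}{x-1}$ and $C_j$ are bounded absolutely under $q \geq 2$, so that the final error is genuinely $O_k$ in the sense of the paper's conventions.
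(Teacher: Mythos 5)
Your argument is correct, and every step checks out: the reversal $n = N-1-m$ turns the sum into $x^{N-1}\sum_{m}(N-1-m)^k x^{-m}$ with $x = q^{1/2} \geq \sqrt{2}$, the $j=0$ binomial term gives exactly $(N-1)^k\frac{x^N-1}{x-1}$, and the discrepancies $O_k(N^{k-1}x^N/(x-1))$, $O_k(N^k/(x-1))$, and the $j\geq 1$ contributions $\ll_k N^{k-1}x^{N-1}C_j$ all land inside $O_k((\degree Q)^{k-1}\lvert Q\rvert^{1/2})$, with the constants $\frac{1}{x-1}\leq\frac{1}{\sqrt{2}-1}$ and $C_j\leq\sum_{m\geq 0}m^j 2^{-m/2}$ uniform in $q$, matching the paper's convention of $q$-independent implied constants. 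The route is genuinely different from the paper's, which never reverses the sum: it instead writes $\sum_n n^k q^{n/2} = \frac{1}{q^{1/2}-1}\sum_n(n^k q^{(n+1)/2}-n^k q^{n/2})$, completes the first piece to the telescoping sum $\sum_n\big((n+1)^k q^{(n+1)/2}-n^k q^{n/2}\big) = (\degree Q)^k\lvert Q\rvert^{1/2}$, and bounds the correction $\sum_n((n+1)^k-n^k)q^{(n+1)/2}$ by expanding $(n+1)^k - n^k$ binomially and summing the geometric series. The paper's telescoping identity produces the main term in closed form in one step and is arguably slicker; your reversal-of-summation argument is the more systematic device (it generalises immediately to any weight concentrating at the top of the range) at the cost of having to track the two small discrepancies from replacing $(N-1)^k\frac{x^N-1}{x-1}$ by $\frac{N^k x^N}{x-1}$. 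Both approaches ultimately rest on the same binomial expansion plus geometric-series estimate for the error term.
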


\begin{proof}
We have that
\begin{align*}
\sum_{n=0}^{\degree Q -1} n^k q^{\frac{n}{2}}
= &\frac{1}{q^{\frac{1}{2}} -1} \sum_{n=0}^{\degree Q -1} \Big( n^k q^{\frac{n+1}{2}} - n^k q^{\frac{n}{2}} \Big) \\
= &\frac{1}{q^{\frac{1}{2}} -1} \sum_{n=0}^{\degree Q -1} \Big( (n+1)^k q^{\frac{n+1}{2}} - n^k q^{\frac{n}{2}} \Big)
- \frac{1}{q^{\frac{1}{2}} -1} \sum_{n=0}^{\degree Q -1} \Big( (n+1)^k q^{\frac{n+1}{2}} - n^k q^{\frac{n+1}{2}} \Big) \\
= &\frac{1}{q^{\frac{1}{2}}-1} (\degree Q)^k \lvert Q \rvert^{\frac{1}{2}}
+O \bigg( \sum_{i=0}^{k-1} \binom{k}{i} (\degree Q)^i  \sum_{n=0}^{\degree Q -1} q^{\frac{n+1}{2}} \bigg) \\
= &\frac{1}{q^{\frac{1}{2}}-1} (\degree Q)^k \lvert Q \rvert^{\frac{1}{2}}
+ O_k \bigg( (\degree Q)^{k-1} \lvert Q \rvert^{\frac{1}{2}} \bigg)
\end{align*}
as $\degree Q \longrightarrow \infty$.
\end{proof}

\begin{proof}[Proof of Theorem \ref{first_moment_theorem}]
We can easily see that
\begin{align*}
L^{(k)} (s,\chi )
= (-\log q)^k \sum_{n=1}^{\degree Q -1} n^k q^{-ns} \sum_{\substack{A \in \mathcal{M} \\ \degree A=n}} \chi (A) ,
\end{align*}
from which we deduce that
\begin{align*}
\frac{1}{\phi (Q)} \sum_{\substack{\chi \modulus Q \\ \chi \neq \chi_0}} L^{(k)} \Big( \frac{1}{2} , \chi  \Big)
= &\frac{(-\log q)^k}{\phi (Q)} \sum_{n=1}^{\degree Q -1} n^k q^{-\frac{n}{2}} \sum_{\substack{A \in \mathcal{M} \\ \degree A=n}} \sum_{\substack{\chi \modulus Q \\ \chi \neq \chi_0}} \chi (A) \\
= &- \frac{(-\log q)^k}{\phi (Q)} \sum_{n=1}^{\degree Q -1} n^k q^{-\frac{n}{2}} \sum_{\substack{A \in \mathcal{M} \\ \degree A=n}} 1 \\
= &-\frac{(-\log q)^k}{q^{\frac{1}{2}}-1} \frac{ (\degree Q)^k}{\lvert Q \rvert^{\frac{1}{2}}}
+ O_k \bigg( (\log q)^k \frac{ (\degree Q)^{k-1}}{\lvert Q \rvert^{\frac{1}{2}}} \bigg)
\end{align*}
as $\degree Q \longrightarrow \infty$. For the second equality we used Lemma \ref{Sum of chi(A) over all chi modulus R}, and for the last equality we used Lemma \ref{Lemma, sum of n^k q^(n/2) over n=0 to deg Q -1} and the fact that $\phi (Q) = \lvert Q \rvert -1$ (since $Q$ is prime).
\end{proof}


\section{Second Moments} \label{Second Moments}

\begin{proof}[Proof of Theorem \ref{second_moment_theorem}]
For positive integers $k$ we have that
\begin{align*}
L^{(k)} \Big( \frac{1}{2},\chi \Big)
= (-\log q)^k \sum_{n=1}^{\degree Q -1} n^k q^{-\frac{n}{2}} \sum_{\substack{A \in \mathcal{M} \\ \degree A=n}} \chi (A)
= (-\log q)^k \sum_{\substack{A \in \mathcal{M} \\ \degree A < \degree Q}} \frac{(\log_q \lvert A \rvert )^k \chi (A)}{\lvert A \rvert^{\frac{1}{2}}} ,
\end{align*}
and so
\begin{align*}
\frac{1}{\phi (Q)} \sum_{\substack{\chi \modulus Q \\ \chi \neq \chi_0}} \bigg\lvert L^{(k)} \bigg( \frac{1}{2} , \chi \bigg) \bigg\rvert^2
= \frac{( \log q )^{2k}}{\phi (Q)} \sum_{\substack{A,B \in \mathcal{M} \\ \degree A , \degree B < \degree Q}} \frac{ ( \log_q \lvert A \rvert \log_q \lvert B \rvert )^k}{\lvert AB \rvert^{\frac{1}{2}}} \sum_{\substack{\chi \modulus Q \\ \chi \neq \chi_0}} \chi (A) \charinv\chi (B) .
\end{align*}
We now apply Corollary \ref{Sum of chi(A) inv-chi(B) over chi mod R} to obtain that
\begin{align*}
&\frac{1}{\phi (Q)} \sum_{\substack{\chi \modulus Q \\ \chi \neq \chi_0}} \bigg\lvert L^{(k)} \bigg( \frac{1}{2} , \chi \bigg) \bigg\rvert^2 \\
= &( \log q )^{2k} \sum_{\substack{A \in \mathcal{M} \\ \degree A < \degree Q}} \frac{ ( \log_q \lvert A \rvert )^{2k}}{\lvert A \rvert}
- \frac{( \log q )^{2k}}{\phi (Q)} \sum_{\substack{A,B \in \mathcal{M} \\ \degree A , \degree B < \degree Q}} \frac{ ( \log_q \lvert A \rvert \log_q \lvert B \rvert )^k}{\lvert AB \rvert^{\frac{1}{2}}} .
\end{align*}

For the first term on the RHS we have that
\begin{align*}
\sum_{\substack{A \in \mathcal{M} \\ \degree A < \degree Q}} \frac{ ( \log_q \lvert A \rvert )^{2k}}{\lvert A \rvert}
= \sum_{n=0}^{\degree Q -1} n^{2k}
= \frac{1}{2k+1} (\degree Q)^{2k+1} + O \Big( (\degree Q)^{2k} \Big) 
\end{align*}
as $\degree Q \longrightarrow \infty$, where the final equality uses Faulhaber's formula. For the second term we have that
\begin{align*}
&\frac{1}{\phi (Q)} \sum_{\substack{A,B \in \mathcal{M} \\ \degree A , \degree B < \degree Q}} \frac{ ( \log_q \lvert A \rvert \log_q \lvert B \rvert )^k}{\lvert AB \rvert^{\frac{1}{2}}}
= \frac{1}{\phi (Q)} \bigg( \sum_{n=0}^{\degree Q -1}  n^k q^{\frac{n}{2}} \bigg)^2 \\
\leq &\frac{1}{\phi (Q)} \bigg( (\degree Q)^k \sum_{n=0}^{\degree Q -1} q^{\frac{n}{2}} \bigg)^2 
\ll \frac{1}{\phi (Q)} \big( (\degree Q)^k \lvert Q \rvert^{\frac{1}{2}} \big)^2 
= O \Big( (\degree Q)^{2k} \Big) 
\end{align*}
as $\degree Q \longrightarrow \infty$. The result now follows.
\end{proof}


\section{Fourth Moments: Expressing as  Manageable Summations} \label{Fourth Moments: Expressing as  Manageable Summations}

Before proceeding to the main part of the proof for the fourth moments, we need to express the fourth moments as more manageable summations. \\

A generalisation of the following theorem appears in Rosen's book \cite[Theorem 9.24 A]{NumTheoFuncField_Rosen_2002}.

\begin{theorem} [Functional Equation for $L (s,\chi )$]
Let $\chi $ be a non-trivial character modulo a monic prime polynomial $Q$. If $\chi $ is an odd character, then $L(s,\chi )$ satisfies the functional equation
\begin{align*}
L(s,\chi ) = W(\chi ) q^{\frac{\degree Q -1}{2}} (q^{-s})^{\degree Q -1} L(1-s, \charinv\chi ) ,
\end{align*}
and if $\chi $ is an even character, then $L(s,\chi )$ satisfies the functional equation
\begin{align*}
(q^{1-s} - 1 ) L(s,\chi ) = W(\chi ) q^{\frac{\degree Q}{2}} (q^{-s} - 1 ) (q^{-s})^{\degree Q -1} L(1-s, \charinv\chi ) ;
\end{align*}
where we always have
\begin{align*}
\lvert W(\chi ) \rvert = 1 .
\end{align*}
\end{theorem}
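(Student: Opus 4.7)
The plan is to prove the functional equation via Gauss sums, following the standard adaptation of the classical argument that appears in Rosen. Writing $u = q^{-s}$, we have from Definition \ref{Definition, L_n ( chi )} that $L(s,\chi) = \sum_{n=0}^{\degree Q -1} L_n(\chi) u^n$; both sides of each asserted identity are polynomials in $u$ of degree at most $\degree Q -1$, so the functional equation reduces to a coefficient-by-coefficient identity relating $L_n(\chi)$ and $L_{\degree Q -1-n}(\charinv\chi)$.

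Fix a non-trivial additive character $\psi$ of $\mathbb{F}_q$ and define the additive character $e$ on the field of Laurent series $\mathbb{F}_q((1/t))$ by $e(f) := \psi(\mathrm{res}_\infty f)$, where $\mathrm{res}_\infty f$ is the coefficient of $t^{-1}$ in $f$. Set
$$\tau(\chi) := \sum_{A \bmod Q} \chi(A) e(A/Q).$$
Standard orthogonality arguments give $\lvert \tau(\chi) \rvert^2 = \lvert Q \rvert$ together with the inversion formula $\chi(A) \tau(\charinv\chi) = \sum_{B \bmod Q} \charinv\chi(B) e(AB/Q)$, valid for $(A,Q)=1$.

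Substituting the inversion into $L(s,\chi) = \sum_n u^n \sum_{A \in \mathcal{M}_n} \chi(A)$ and swapping sums, the problem reduces to evaluating $\sum_{A \in \mathcal{M}_n} e(AB/Q)$ for each $B \bmod Q$. Writing $A = t^n + C$ with $\degree C < n$, this factors as $e(t^n B/Q)$ times a sum of $e$ over an $\mathbb{F}_q$-subspace of dimension $n$, which by orthogonality equals $q^n$ if $\degree B \leq \degree Q -1-n$ and $0$ otherwise. A short computation shows $e(t^n B/Q) = \psi(b_{\degree Q -1-n})$, where $b_j$ denotes the coefficient of $t^j$ in $B$; after a change of variable the remaining sum over $B$ assembles itself into $u^{\degree Q -1} L(1-s, \charinv\chi)$ times a scalar, yielding the desired coefficient identity. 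Defining $W(\chi)$ as the appropriate normalisation of $\tau(\chi)/q^{\degree Q /2}$ and using $\lvert \tau(\chi) \rvert^2 = \lvert Q \rvert$ gives $\lvert W(\chi) \rvert = 1$ at once.

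The main obstacle, and the source of the different form of the identity for odd versus even $\chi$, lies in removing the monicity constraint on $A$: before applying orthogonality it is much cleaner to sum over $\mathcal{A}_n = \mathbb{F}_q^* \cdot \mathcal{M}_n$ than over $\mathcal{M}_n$. When $\chi$ is odd, Lemma \ref{Sum of chi(A) over base field} forces the extra $\mathbb{F}_q^*$-contributions to vanish cleanly and the odd-case identity drops out. When $\chi$ is even the $\mathbb{F}_q^*$-contributions survive and, after careful bookkeeping of the boundary coefficients $L_0(\chi)$ and what would have been $L_{\degree Q}(\chi)$, produce exactly the extra factors $(q^{-s}-1)$ and $(q^{1-s}-1)$ appearing in the even-case statement.
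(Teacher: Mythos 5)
The paper offers no proof of this theorem to compare against: it is imported from Rosen \cite{NumTheoFuncField_Rosen_2002}, whose Theorem 9.24 A is the generalisation to primitive characters of arbitrary modulus. Your Gauss-sum argument is the standard self-contained route for prime modulus, and its skeleton is sound: a non-trivial character modulo a prime is automatically primitive, so the inversion formula and $\lvert \tau(\chi)\rvert^2 = \lvert Q\rvert$ apply; the evaluation of $\sum_{A \in \mathcal{M}_n} e(AB/Q)$ as $q^n\psi(b_{\degree Q - 1 - n})$ for $\degree B \leq \degree Q - 1 - n$ and $0$ otherwise is correct; and the reindexing $n \mapsto \degree Q - 1 - n$ does produce $(q^{-s})^{\degree Q - 1}L(1-s,\charinv\chi)$ up to a scalar.

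The one place where your accounting is off is the odd case, and it matters for $\lvert W(\chi)\rvert = 1$. Setting $m = \degree Q - 1 - n$, the surviving sum over $B$ with $\degree B \leq m$ splits into the terms with $\degree B < m$, where $\psi(b_m) = \psi(0) = 1$ and Lemma \ref{Sum of chi(A) over base field} does make them vanish, and the terms with $\degree B = m$, which after extracting the leading coefficient give $g(\charinv\chi)\, L_m(\charinv\chi)$ with $g(\charinv\chi) := \sum_{\alpha \in \mathbb{F}_q^*} \charinv\chi(\alpha)\psi(\alpha)$. This $\mathbb{F}_q$-level Gauss sum does not vanish; it has modulus $q^{1/2}$ precisely because $\chi$ is odd, and it is needed: the identity one actually obtains is $\tau(\charinv\chi)L(s,\chi) = g(\charinv\chi)\,q^{\degree Q - 1}(q^{-s})^{\degree Q - 1}L(1-s,\charinv\chi)$, so that $W(\chi) = g(\charinv\chi)\,q^{(\degree Q - 1)/2}/\tau(\charinv\chi)$ and $\lvert W(\chi)\rvert = q^{1/2}\cdot q^{(\degree Q-1)/2}\cdot q^{-\degree Q/2} = 1$. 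Without the $g(\charinv\chi)$ factor the exponents do not balance, so the claim that ``the extra $\mathbb{F}_q^*$-contributions vanish cleanly'' is not quite right, and the final sentence deducing $\lvert W(\chi)\rvert = 1$ from $\lvert\tau(\chi)\rvert^2 = \lvert Q\rvert$ alone is incomplete: you also need the modulus of the $\mathbb{F}_q$-level Gauss sum of the (non-trivial) restriction of $\charinv\chi$ to $\mathbb{F}_q^*$. The even case is as you describe ($g(\charinv\chi) = -1$, and the $\degree B < m$ terms contribute $(q-1)\sum_{j < m}L_j(\charinv\chi)$, whose telescoping produces the factors $q^{-s}-1$ and $q^{1-s}-1$), so with the odd-case factor restored the proof goes through.
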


\begin{proposition} \label{Proposition, odd character L(1/2 ,chi) shorter sum}
Let $\chi$ be an odd character modulo a prime $Q$, and let $k$ be a non-negative integer. Then,
\begin{align*}
&(\log q)^{-2k} \Big\lvert L^{(k)} \Big( \frac{1}{2}, \chi \Big) \Big\rvert^2 \\
= &\sum_{\substack{A,B \in \mathcal{M} \\ \degree AB < \degree Q}} \frac{ \Big( f_{k} \big( \degree A, \degree B, \degree Q \big) + g_{O,k} \big( \degree A, \degree B, \degree Q \big) \Big) \chi (A) \charinv\chi (B)}{\lvert AB \rvert^{\frac{1}{2}}} \\
&+ \sum_{\substack{A,B \in \mathcal{M} \\ \degree AB = \degree Q -1}} \frac{ h_{O,k} \big( \degree A, \degree B, \degree Q \big) \chi (A) \charinv\chi (B)}{\lvert AB \rvert^{\frac{1}{2}}} ,
\end{align*}
where
\begin{align*}
f_{k} \big( \degree A, \degree B, \degree Q \big) = &(\degree A )^k (\degree B)^k + (\degree Q -\degree A)^k (\degree Q -\degree B)^k , \\
g_{O,k} \big( \degree A, \degree B, \degree Q \big) = &(\degree Q -\degree A -1)^k (\degree Q -\degree B -1)^k \\
&- (\degree Q -\degree A)^k (\degree Q -\degree B)^k , \\
h_{O,k} \big( \degree A, \degree B, \degree Q \big) = &- (\degree Q -\log_q \lvert A \rvert -1)^k (\degree Q -\log_q \lvert B \rvert -1)^k .
\end{align*}
\end{proposition}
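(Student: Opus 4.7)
The plan is to derive a functional-equation-symmetric form for $\lvert L^{(k)}(1/2,\chi) \rvert^2$ by expanding as a finite Dirichlet series, squaring, and folding via the odd functional equation. First I would write
\begin{equation*}
L^{(k)} \Big( \frac{1}{2}, \chi \Big) = (-\log q)^k \sum_{n=0}^{\degree Q -1} n^k q^{-n/2} L_n(\chi),
\end{equation*}
and set $c_n := q^{-n/2} L_n(\chi)$, so that
\begin{equation*}
(\log q)^{-2k} \Big\lvert L^{(k)} \Big( \frac{1}{2},\chi \Big) \Big\rvert^2 = \sum_{n,m=0}^{\degree Q -1} n^k m^k c_n \overline{c_m}.
\end{equation*}

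The key algebraic input is the symmetry coming from the odd functional equation. Expanding both sides of that equation in powers of $q^{-s}$ and matching the coefficient of $q^{-ns}$ yields $L_n(\chi) = W(\chi) q^{n - (\degree Q - 1)/2} L_{\degree Q - 1 - n}(\overline{\chi})$; multiplying by $q^{-n/2}$ and using $L_m(\overline{\chi}) = \overline{L_m(\chi)}$ gives $c_n = W(\chi)\, \overline{c_{\degree Q - 1 - n}}$. Combined with $\lvert W(\chi) \rvert^2 = 1$, this produces the central identity
\begin{equation*}
c_n \overline{c_m} = c_{\degree Q - 1 - m}\, \overline{c_{\degree Q - 1 - n}}.
\end{equation*}

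Next I would partition the double sum over $(n,m) \in [0, \degree Q - 1]^2$ according to whether $n + m$ is less than, equal to, or greater than $\degree Q - 1$. On the third region, the involution $(n,m) \mapsto (\degree Q - 1 - m, \degree Q - 1 - n)$ is a bijection onto the first which preserves $c_n \overline{c_m}$ by the identity above, and converts the weight $n^k m^k$ into $(\degree Q - 1 - m)^k (\degree Q - 1 - n)^k$. Folding the third region onto the first, and re-substituting $c_n = q^{-n/2} L_n(\chi)$, yields
\begin{equation*}
\sum_{\substack{A,B \in \mathcal{M} \\ \degree A + \degree B < \degree Q -1}} \Big( (\degree A)^k (\degree B)^k + (\degree Q - 1 - \degree A)^k (\degree Q - 1 - \degree B)^k \Big) \frac{\chi(A) \overline{\chi}(B)}{\lvert AB \rvert^{1/2}},
\end{equation*}
while the diagonal $\degree A + \degree B = \degree Q - 1$ contributes its original weight $(\degree A)^k (\degree B)^k$ times $\chi(A)\overline{\chi}(B)/\lvert AB\rvert^{1/2}$.

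Finally I would verify that $f_k + g_{O,k}$ equals the bracketed quantity above identically in $\degree A, \degree B, \degree Q$, which is immediate from the stated definitions. This already reproduces the proposition's main summation on $\degree A + \degree B < \degree Q - 1$. On the diagonal $\degree A + \degree B = \degree Q - 1$ one has $\degree Q - 1 - \degree A = \degree B$, so the main summand $f_k + g_{O,k}$ doubles to $2(\degree A)^k (\degree B)^k$; this overcounts the true contribution by $(\degree A)^k(\degree B)^k = (\degree Q - 1 - \degree A)^k(\degree Q - 1 - \degree B)^k$, and the term $h_{O,k}$ is exactly the negative of this. The main obstacle is precisely this boundary bookkeeping: the self-dual set $\degree A + \degree B = \degree Q - 1$ is fixed by the involution, so naive folding would double-count, and the explicit correction $h_{O,k}$ supported on this set is required to restore equality.
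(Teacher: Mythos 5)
Your proposal is correct and is essentially the paper's own argument: the paper likewise folds the long sum via the odd functional equation, by differentiating the polynomial identity in $q^{-s}$ and retaining the coefficients of $(q^{-s})^n$ for $n \leq \degree Q -1$ from one side and $n \leq \degree Q -2$ from the reflected side, which is exactly your involution $(n,m) \mapsto (\degree Q -1-m, \degree Q -1-n)$ together with the diagonal correction $h_{O,k}$ on $\degree AB = \degree Q -1$. Your formulation via the coefficient symmetry $c_n = W(\chi)\,\conj{c_{\degree Q -1-n}}$ is a clean repackaging of the same computation, not a different route.
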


\begin{remark}
The ``$O$" in the subscript is to signify that these polynomials apply to the odd character case. It is important to note that $g_{O,k} \big( \degree A, \degree B, \degree Q \big)$ has degree $2k-1$, whereas \\
$f_{k} \big( \degree A, \degree B, \degree Q \big)$ has degree $2k$ (hence, the later ultimately conributes the higher order term); and that all three polynomials are independent of $q$.
\end{remark}

\begin{proof}
The functional equation gives us that
\begin{align*}
 \sum_{n=0}^{\degree Q-1} L_n (\chi) (q^{-s})^n
= &W (\chi) q^{\frac{\degree Q-1}{2}} (q^{-s})^{\degree Q-1} \sum_{n=0}^{\degree Q-1}  L_n (\charinv\chi) (q^{s-1})^n \\
= &W (\chi) q^{-\frac{\degree Q-1}{2}} \sum_{n=0}^{\degree Q-1} L_n (\charinv\chi) (q^{1-s})^{\degree Q-n-1} .
\end{align*}
Taking the $k^{\text{th}}$ derivative of both sides gives
\begin{align*}
&(-\log q)^k  \sum_{n=0}^{\degree Q-1} n^k L_n (\chi) (q^{-s})^n \\
= &(-\log q)^k W (\chi) q^{-\frac{\degree Q-1}{2}} \sum_{n=0}^{\degree Q-1} (\degree Q-n-1)^k  L_n (\charinv\chi) (q^{1-s})^{\degree Q-n-1} .
\end{align*}
Let us now take the squared modulus of both sides, to get
\begin{align*}
&(\log q)^{2k}  \sum_{n=0}^{2 \degree Q-2} \bigg( \sum_{\substack{i+j=n \\ 0 \leq i,j < \degree Q}} i^k j^k L_i (\chi) L_j (\charinv\chi)\bigg) (q^{-s})^n \\
= &(\log q)^{2k} q^{-\degree Q +1} \sum_{n=0}^{2 \degree Q-2} \bigg( \sum_{\substack{i+j=n \\ 0 \leq i,j < \degree Q}} (\degree Q -i-1)^k (\degree Q -j-1)^k L_i (\chi) L_j (\charinv\chi)\bigg) (q^{1-s})^{2\degree Q -n-2} .
\end{align*}

Both sides of the above are equal to $\big\lvert L^{(k)} (s, \chi) \big\rvert^2$. By the linear independence of powers of $q^{-s}$, we have that $\big\lvert L^{(k)} (s, \chi) \big\rvert^2$ is the sum of the terms corresponding to $n=0, 1, \ldots , \degree Q -1$ from the LHS and $n=0, 1, \ldots , \degree Q -2$ from the RHS. This gives
\begin{align*}
&(\log q)^{-2k} \big\lvert L^{(k)} (s, \chi) \big\rvert^2 \\
= &\sum_{n=0}^{\degree Q-1} \bigg( \sum_{\substack{i+j=n \\ 0 \leq i,j < \degree Q}} i^k j^k L_i (\chi) L_j (\charinv\chi)\bigg) (q^{-s})^n \\
&+ q^{-\degree Q +1} \sum_{n=0}^{\degree Q-2} \bigg( \sum_{\substack{i+j=n \\ 0 \leq i,j < \degree Q}} (\degree Q -i-1)^k (\degree Q -j-1)^k L_i (\chi) L_j (\charinv\chi)\bigg) (q^{1-s})^{2\degree Q -n-2} .
\end{align*}
We now substitue $s=\frac{1}{2}$ and simplify the right-hand-side to get
\begin{align*}
&(\log q)^{-2k} \Big\lvert L^{(k)} \Big( \frac{1}{2}, \chi \Big) \Big\rvert^2 \\
= &\sum_{n=0}^{\degree Q-1} \bigg( \sum_{\substack{i+j=n \\ 0 \leq i,j < \degree Q}} i^k j^k L_i (\chi) L_j (\charinv\chi)\bigg) q^{-\frac{n}{2}} \\
&+ \sum_{n=0}^{\degree Q-2} \bigg( \sum_{\substack{i+j=n \\ 0 \leq i,j < \degree Q}} (\degree Q -i-1)^k (\degree Q -j-1)^k L_i (\chi) L_j (\charinv\chi)\bigg) q^{-\frac{n}{2}} \\
= &\sum_{n=0}^{\degree Q-1} \bigg( \sum_{\substack{i+j=n \\ 0 \leq i,j < \degree Q}} \Big[ i^k j^k + (\degree Q -i-1)^k (\degree Q -j-1)^k \Big] L_i (\chi) L_j (\charinv\chi)\bigg) q^{-\frac{n}{2}} \\
&- \sum_{\substack{i+j=\degree Q-1 \\ 0 \leq i,j < \degree Q}} (\degree Q -i-1)^k (\degree Q -j-1)^k L_i (\chi) L_j (\charinv\chi) q^{-\frac{\degree Q-1}{2}} .
\end{align*}
Finally, we substitute back $L_n (\chi) = \sum_{\substack{A \in \mathcal{M} \\ \degree A = n}} \chi (A)$ to obtain the required result.
\end{proof}

\begin{definition} \label{L hat definition}
For all $s \in \mathbb{C}$ and all non-trivial even characters, $\chi$, of prime modulus we define
\begin{align} \label{L hat definition, equation}
\hat{L} (s, \chi )
:= (q^{1-s} -1) L(s, \chi ) .
\end{align}
\end{definition}

\begin{proposition} \label{Proposition, L in terms of L hat}
For all non-trivial even characters, $\chi$, of prime modulus and all non-negative integers $k$ we have that
\begin{align*}
L^{(k)} \Big( \frac{1}{2} , \chi \Big)
= &\frac{1}{q^{\frac{1}{2}} -1} \hat{L}^{(k)} \Big( \frac{1}{2} , \chi \Big)
+ \frac{1}{q^{\frac{1}{2}} -1} \sum_{i=0}^{k-1} (-\log q)^{k-i} p_{k,i} \Big( \frac{q^{\frac{1}{2}}}{q^{\frac{1}{2}}-1} \Big)  \hat{L}^{(i)} \Big( \frac{1}{2} , \chi \Big) \\
= &\frac{1}{q^{\frac{1}{2}} -1} \sum_{i=0}^{k} (-\log q)^{k-i} p_{k,i} \Big( \frac{q^{\frac{1}{2}}}{q^{\frac{1}{2}}-1} \Big)  \hat{L}^{(i)} \Big( \frac{1}{2} , \chi \Big) ,
\end{align*}
where, for non-negative integers $k,i$ satisfying $i \leq k$, we define the polynomials $p_{k,i}$ by
\begin{align*}
p_{k,k} \Big( \frac{q^{\frac{1}{2}}}{q^{\frac{1}{2}}-1} \Big) = &1 , \\
p_{k,i} \Big( \frac{q^{\frac{1}{2}}}{q^{\frac{1}{2}}-1} \Big) = &- \frac{q^{\frac{1}{2}}}{q^{\frac{1}{2}} -1} \sum_{j=i}^{k-1} \binom{k}{j} p_{j,i} \Big( \frac{q^{\frac{1}{2}}}{q^{\frac{1}{2}}-1} \Big)
\quad \quad \text{ for $i<k$.}
\end{align*}
\end{proposition}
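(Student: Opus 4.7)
The plan is to differentiate the defining relation $\hat{L}(s,\chi) = (q^{1-s}-1) L(s,\chi)$ from Definition~\ref{L hat definition} via the Leibniz rule, specialize at $s=\tfrac{1}{2}$, and then invert the resulting triangular linear system by induction on $k$. The entire argument is purely formal; no input beyond the defining relation is required.

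First I would note that for $f(s) := q^{1-s}-1$ we have $f^{(m)}(s) = (-\log q)^m q^{1-s}$ for all $m \geq 1$, while $f(s) = q^{1-s}-1$. Applying the Leibniz rule to $\hat{L}(s,\chi) = f(s) L(s,\chi)$, evaluating at $s = \tfrac{1}{2}$, and re-indexing via $j = k-m$ yields
\begin{align*}
\hat{L}^{(k)}\Big( \tfrac{1}{2}, \chi \Big)
= (q^{1/2} - 1)\, L^{(k)}\Big( \tfrac{1}{2}, \chi \Big)
+ q^{1/2} \sum_{j=0}^{k-1} \binom{k}{j} (-\log q)^{k-j} L^{(j)}\Big( \tfrac{1}{2}, \chi \Big) .
\end{align*}
Dividing through by $q^{1/2} - 1$ produces the one-step recursion
\begin{align*}
L^{(k)}\Big( \tfrac{1}{2}, \chi \Big)
= \frac{1}{q^{1/2}-1} \hat{L}^{(k)}\Big( \tfrac{1}{2}, \chi \Big)
- \frac{q^{1/2}}{q^{1/2}-1} \sum_{j=0}^{k-1} \binom{k}{j} (-\log q)^{k-j} L^{(j)}\Big( \tfrac{1}{2}, \chi \Big) .
\end{align*}

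Next I would prove the closed form by strong induction on $k$. The base case $k=0$ reduces to $L(\tfrac{1}{2},\chi) = \hat{L}(\tfrac{1}{2},\chi)/(q^{1/2}-1)$, which agrees with $p_{0,0} = 1$. For the inductive step, I would substitute the inductive hypothesis for each $L^{(j)}(\tfrac{1}{2},\chi)$ with $j < k$ into the one-step recursion, swap the order of summation, and use $(-\log q)^{k-j}(-\log q)^{j-i} = (-\log q)^{k-i}$ to collect like powers of $\hat{L}^{(i)}(\tfrac{1}{2},\chi)$. The coefficient of $(q^{1/2}-1)^{-1} (-\log q)^{k-i} \hat{L}^{(i)}(\tfrac{1}{2},\chi)$ for $i<k$ then reads
\begin{align*}
-\frac{q^{1/2}}{q^{1/2}-1} \sum_{j=i}^{k-1} \binom{k}{j} p_{j,i}\Big( \tfrac{q^{1/2}}{q^{1/2}-1} \Big) ,
\end{align*}
which is precisely the prescribed recursive definition of $p_{k,i}$, while the coefficient of $\hat{L}^{(k)}(\tfrac{1}{2},\chi)$ is $1 = p_{k,k}$. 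This closes the induction and yields both displayed forms of the proposition, the second being simply the first with the $i=k$ term folded into the sum.

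There is no genuine obstacle here: the whole proof is bookkeeping. The only subtle point is to track the sign and the factor $q^{1/2}/(q^{1/2}-1)$ produced by the Leibniz expansion correctly so that they align with the recursive definition of $p_{k,i}$; since that recursion was designed to mirror exactly this manipulation, the match is automatic.
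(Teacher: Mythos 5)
Your proposal is correct and follows essentially the same route as the paper: differentiate the defining relation $\hat{L}(s,\chi) = (q^{1-s}-1)L(s,\chi)$ via Leibniz, evaluate at $s=\tfrac{1}{2}$ to get the one-step recursion, and close a strong induction by swapping the order of summation so that the coefficient of $\hat{L}^{(i)}(\tfrac{1}{2},\chi)$ matches the recursive definition of $p_{k,i}$. The only cosmetic difference is that you state the recursion at level $k$ and induct directly, whereas the paper differentiates $k+1$ times in the inductive step; the bookkeeping is identical.
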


\begin{remark}
Because $\frac{q^{\frac{1}{2}}}{q^{\frac{1}{2}}-1} < 4$ for all prime powers $q$, we can see that the polynomials $p_{k,i} \Big( \frac{q^{\frac{1}{2}}}{q^{\frac{1}{2}}-1} \Big)$ can be bounded independently of $q$ (but dependent on $k$ and $i$ of course). The factors $(-\log q)^{k-i}$ are of course still dependent on $q$, as well as $k$ and $i$. These two points are imporant when we later determine how the lower order terms in our main results are dependent on $q$.
\end{remark}

\begin{proof}
We prove this by strong induction on $k$. The base case, $k=0$, is obvious by Definition \ref{L hat definition}. Now, suppose the claim holds for $j=0,1,\ldots , k$. Differentiating, $k+1$ number of times, the equation (\ref{L hat definition, equation}) gives
\begin{align*}
\hat{L}^{(k+1)} (s, \chi )
= (q^{1-s} -1) L^{(k+1)} (s, \chi ) + q^{1-s} \sum_{j=0}^{k} \binom{k+1}{j} (- \log q)^{k+1-j} L^{(j)} (s, \chi ) .
\end{align*}
Substituting $s=\frac{1}{2}$ and rearranging gives
\begin{align*}
L^{(k+1)} \Big( \frac{1}{2}, \chi \Big)
= \frac{1}{q^{\frac{1}{2}} -1} \hat{L}^{(k+1)} \Big( \frac{1}{2}, \chi \Big) - \frac{q^{\frac{1}{2}}}{q^{\frac{1}{2}} -1} \sum_{j=0}^{k} \binom{k+1}{j} (- \log q)^{k+1-j} L^{(j)} \Big( \frac{1}{2}, \chi \Big) .
\end{align*}
We now apply the inductive hypothesis to obatin
\begin{align*}
&L^{(k+1)} \Big( \frac{1}{2}, \chi \Big) \\
= &\frac{1}{q^{\frac{1}{2}} -1} \hat{L}^{(k+1)} \Big( \frac{1}{2}, \chi \Big) \\
&- \frac{q^{\frac{1}{2}}}{q^{\frac{1}{2}} -1} \sum_{j=0}^{k} \binom{k+1}{j} (- \log q)^{k+1-j} \frac{1}{q^{\frac{1}{2}} -1} \sum_{i=0}^{j} (-\log q)^{j-i} p_{j,i} \Big( \frac{q^{\frac{1}{2}}}{q^{\frac{1}{2}}-1} \Big)  \hat{L}^{(i)} \Big( \frac{1}{2} , \chi \Big) \\
= &\frac{1}{q^{\frac{1}{2}} -1} \hat{L}^{(k+1)} \Big( \frac{1}{2}, \chi \Big) \\
&+ \frac{1}{q^{\frac{1}{2}} -1} \sum_{i=0}^{k} (- \log q)^{k+1-i} \bigg( - \frac{q^{\frac{1}{2}}}{q^{\frac{1}{2}} -1} \sum_{j=i}^{k} \binom{k+1}{j} p_{j,i} \Big( \frac{q^{\frac{1}{2}}}{q^{\frac{1}{2}}-1} \Big) \bigg)  \hat{L}^{(i)} \Big( \frac{1}{2} , \chi \Big) .
\end{align*}
The result folows by the definition of the polynomials $p_{k,i}$ .
\end{proof}

\begin{proposition} \label{Proposition, even character L(1/2 ,chi) shorter sum}
For all non-negative integers $k$, and all non-trivial even characters $\chi$ of prime modulus $Q$, we have that
\begin{align*}
&\frac{1}{(\log q)^{2k} (q^{\frac{1}{2}} -1)^2} \Big\lvert \hat{L}^{k} \Big( \frac{1}{2}, \chi \Big) \Big\lvert^2 \\
= &\sum_{\substack{A,B \in \mathcal{M} \\ \degree AB < \degree Q}} \frac{f_{k} \big( \degree A, \degree B, \degree Q \big) + g_{E , k} \big( \degree A, \degree B, \degree Q \big)}{\lvert AB \rvert^{\frac{1}{2}}} \\
&+ \sum_{\degree Q -2 \leq n \leq \degree Q}  \sum_{\substack{A,B \in \mathcal{M} \\ \degree AB = n}} \frac{h_{E , k , n} \big( \degree A, \degree B, \degree Q \big) }{\lvert AB \rvert^{\frac{1}{2}}} ,
\end{align*}
where
\begin{align*}
f_{k} \big( \degree A, \degree B, \degree Q \big)
= (\degree A)^{k} (\degree B)^{k} + (\degree Q - \degree A)^{k} (\degree Q - \degree B)^{k} ,
\end{align*}
 and $g_{E , k} \big( \degree A, \degree B, \degree Q \big) \; , \; h_{E , k, n} \big( \degree A, \degree B, \degree Q \big)$ are polynomials of degrees $2k -1$ and $2k$, respectively, whose coefficients can be bounded independently of $q$.
\end{proposition}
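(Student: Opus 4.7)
The plan is to mirror the odd-case argument of Proposition \ref{Proposition, odd character L(1/2 ,chi) shorter sum}, applied to $\hat{L}(s,\chi) = (q^{1-s}-1)L(s,\chi)$ rather than to $L$ itself, since $\hat{L}$ enjoys a clean symmetric functional equation in the even case. To obtain that equation I would start from the given even-character functional equation, use $\hat{L}(1-s,\charinv\chi) = (q^s-1)L(1-s,\charinv\chi)$, and observe that $\frac{q^{-s}-1}{q^s-1} = -q^{-s}$, which gives
\begin{equation*}
\hat{L}(s,\chi) \;=\; -W(\chi)\,q^{\degree Q/2}\,(q^{-s})^{\degree Q}\,\hat{L}(1-s,\charinv\chi).
\end{equation*}
This is the direct even-character analogue of the identity driving the odd-case proof, with exponent $\degree Q$ in place of $\degree Q-1$.

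Next I would expand $\hat{L}(s,\chi) = \sum_{n=0}^{\degree Q}\hat{L}_n(\chi)(q^{-s})^n$ where $\hat{L}_n(\chi) = qL_{n-1}(\chi) - L_n(\chi)$ (under the conventions $L_{-1}=L_{\degree Q}=0$), differentiate $k$ times, and use the symmetric functional equation to obtain a second expansion for $\hat{L}^{(k)}(s,\chi)$ in terms of $(q^{1-s})^{\degree Q-n}$. Following the odd case, and using $\overline{\hat{L}^{(k)}(s,\chi)} = \hat{L}^{(k)}(s,\charinv\chi)$ on the critical line together with $|W(\chi)|^2 = 1$, I would compute $\hat{L}^{(k)}(s,\chi)\,\hat{L}^{(k)}(s,\charinv\chi)$ two ways: Dirichlet times Dirichlet, and functional times functional. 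These produce two polynomial expressions for $|\hat{L}^{(k)}(s,\chi)|^2$ in $q^{-s}$ of total degree $2\degree Q$, whose coefficients (after reducing to $s=1/2$) contain the symmetric pieces $i^k j^k$ and $(\degree Q - i)^k(\degree Q - j)^k$, respectively. Taking the low-degree half of the Dirichlet expansion and the complementary high-degree half of the functional expansion reconstitutes the full polynomial while giving each interior power the symmetric coefficient $i^k j^k + (\degree Q - i)^k(\degree Q - j)^k$ over the pairs $(i,j)$ with $i+j=n$.

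Setting $s = 1/2$, I would substitute $\hat{L}_i(\chi)\hat{L}_j(\charinv\chi) = (qL_{i-1}(\chi)-L_i(\chi))(qL_{j-1}(\charinv\chi)-L_j(\charinv\chi))$ and re-express each $L_n$ as a sum over $A \in \mathcal{M}_n$. The four cross-terms, reindexed by $(\degree A,\degree B)$ via the shifts $(0,0), (-1,0), (0,-1), (-1,-1)$, combine with the $q^{-(i+j)/2}$ normalisation to yield, at each interior point $(\degree A, \degree B)$, a coefficient of the shape
\begin{equation*}
\frac{1}{(q^{1/2}-1)^2}\Bigl[\,C_{\alpha,\beta} \;-\; q^{1/2}\bigl(C_{\alpha+1,\beta}+C_{\alpha,\beta+1}\bigr) \;+\; q\,C_{\alpha+1,\beta+1}\,\Bigr],
\end{equation*}
where $C_{\alpha,\beta} := \alpha^k \beta^k + (\degree Q-\alpha)^k(\degree Q-\beta)^k$ and $\alpha=\degree A$, $\beta = \degree B$. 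The leading pieces of the four $C$-terms share the common factor $1 - 2q^{1/2} + q = (q^{1/2}-1)^2$, which cancels the denominator and leaves exactly $f_k(\degree A, \degree B, \degree Q)$. The binomial remainders from $(\alpha+1)^k$, $(\beta+1)^k$, $(\degree Q-\alpha-1)^k$, $(\degree Q-\beta-1)^k$ generate the polynomial $g_{E,k}(\degree A, \degree B, \degree Q)$ of degree $2k-1$, with coefficients uniformly bounded in $q$ because the ratios $q^{1/2}/(q^{1/2}-1)^2$ and $q/(q^{1/2}-1)^2$ are.

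The main obstacle is the boundary bookkeeping. Because $\hat{L}$ has degree $\degree Q$ rather than $\degree Q-1$, and because the shifts $(-1,0), (0,-1), (-1,-1)$ coming from the four-fold cross-expansion spread each Dirichlet level $n$ across three levels of $\degree A + \degree B$, the residual contributions from the low/high split and from the cross-shifts collect at the three consecutive levels $\degree AB = \degree Q-2, \degree Q-1, \degree Q$, rather than at the single level $\degree Q - 1$ of the odd case. Assembling these into the boundary polynomials $h_{E,k,n}$, controlling their degrees (at most $2k$, to match $f_k$), and verifying that their coefficients are $q$-uniformly bounded after the $(q^{1/2}-1)^{-2}$ normalisation is the principal computational burden.
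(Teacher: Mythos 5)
Your proposal follows essentially the same route as the paper's proof: the symmetric functional equation you derive for $\hat{L}$ is exactly the paper's identity $-\sum_n M_n(\chi)(q^{-s})^n = W(\chi)q^{-\degree Q/2}\sum_n M_n(\charinv\chi)(q^{1-s})^{\degree Q-n}$ with $M_n(\chi) = L_n(\chi) - qL_{n-1}(\chi) = -\hat{L}_n(\chi)$, and the subsequent steps (differentiate $k$ times, square, split the powers of $q^{-s}$ between the two expansions, convert $\hat{L}_i\hat{L}_j$ back to $L_iL_j$ via the four shifted cross-terms, and extract the factor $(q^{1/2}-1)^2$ from the second-difference coefficient $qp(i+1,j+1)-q^{1/2}p(i,j+1)-q^{1/2}p(i+1,j)+p(i,j)$) are precisely those of the paper, including the correct identification of the boundary levels $\degree Q-2 \leq n \leq \degree Q$. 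The argument is correct as outlined.
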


\begin{proof}
Let us define $L_{-1} (\chi ) := 0$, and recall from Definition \ref{Definition, L_n ( chi )} that $L_{\degree Q} (\chi) = 0$. We can now define, for $n=0,1,\ldots , \degree Q$,
\begin{align*}
M_n (\chi) := L_n (\chi ) - q L_{n-1} (\chi ) .
\end{align*}
Then, the functional equation for even characters can be written as 
\begin{align} \label{Even func eq with M_n}
- \sum_{n=0}^{\degree Q} M_n (\chi ) (q^{-s})^n
= W(\chi ) q^{-\frac{\degree Q}{2}} \sum_{n=0}^{\degree Q} M_n (\charinv\chi ) (q^{1-s})^{\degree Q -n} .
\end{align}
Note that both sides of (\ref{Even func eq with M_n}) are equal to $\hat{L} (s, \chi )$. We proceed similar to the odd character case. First we differentiate, $k$ number of times, the equation (\ref{Even func eq with M_n}); and then we take the modulus squared of both sides. This gives
\begin{align*}
&(\log q)^{2k} \sum_{n=0}^{2 \degree Q} \bigg( \sum_{\substack{i+j=n \\ 0 \leq i,j \leq \degree Q}} i^k j^k M_i (\chi ) M_j (\charinv\chi ) \bigg) (q^{-s})^n \\
= &(\log q)^{2k} q^{- \degree Q} \sum_{n=0}^{2 \degree Q} \bigg( \sum_{\substack{i+j=n \\ 0 \leq i,j \leq \degree Q}}  (\degree Q -i)^k (\degree Q -j)^k M_i (\chi ) M_j (\charinv\chi ) \bigg) (q^{1-s})^{2\degree Q -n} .
\end{align*}
Now we take the terms corresponding to $n=0,1, \ldots , \degree Q$ from the LHS and $n=0,1,\ldots ,\degree Q -1$ from the RHS to obtain
\begin{align*}
\hat{L}^{(k)} (s, \chi ) 
= &(\log q)^{2k} \sum_{n=0}^{\degree Q} \bigg( \sum_{\substack{i+j=n \\ 0 \leq i,j \leq \degree Q}} i^k j^k M_i (\chi ) M_j (\charinv\chi ) \bigg) (q^{-s})^n \\
+ &(\log q)^{2k} q^{- \degree Q} \sum_{n=0}^{\degree Q -1} \bigg( \sum_{\substack{i+j=n \\ 0 \leq i,j \leq \degree Q}}  (\degree Q -i)^k (\degree Q -j)^k M_i (\chi ) M_j (\charinv\chi ) \bigg) (q^{1-s})^{2\degree Q -n} .
\end{align*}
Substituting $s=\frac{1}{2}$ and simplifying the RHS gives
\begin{align}
\begin{split} \label{Even func eq after s=1/2 and simplifying}
\hat{L}^{(k)} \Big( \frac{1}{2}, \chi \Big)
= &(\log q)^{2k} \sum_{n=0}^{\degree Q -1} \bigg( \sum_{\substack{i+j=n \\ 0 \leq i,j \leq \degree Q}} \Big( i^k j^k + (\degree Q -i)^k (\degree Q -j)^k \Big) M_i (\chi ) M_j (\charinv\chi ) \bigg) q^{-\frac{n}{2}} \\
&+ (\log q)^{2k} \sum_{\substack{i+j= \degree Q \\ 0 \leq i,j \leq \degree Q}} i^k j^k M_i (\chi ) M_j (\charinv\chi ) q^{-\frac{\degree Q}{2}} .
\end{split}
\end{align}

Now, we want factors such as $L_n (\chi)$ in our expression, as opposed to factors like $M_n (\chi )$. To this end, suppose $p (i, j)$ is a finite polynomial. Then,
\begin{align*}
&\sum_{n=0}^{\degree Q -1} \bigg( \sum_{\substack{i+j=n \\ 0 \leq i,j \leq \degree Q}} p (i,j) M_i (\chi ) M_j (\charinv\chi ) \bigg) q^{-\frac{n}{2}} \\
= & \sum_{n=0}^{\degree Q -1} \bigg( \sum_{\substack{i+j=n \\ 0 \leq i,j \leq \degree Q}} p (i,j) \Big( L_i (\chi ) - q L_{i-1} (\chi ) \Big) \Big( L_j (\charinv\chi ) - q L_{j-1} (\charinv\chi ) \Big) \bigg) q^{-\frac{n}{2}} \\
= & \sum_{n=0}^{\degree Q -1} \bigg( \sum_{\substack{i+j=n \\ 0 \leq i,j \leq \degree Q}} p (i,j) L_i (\chi ) L_j (\charinv\chi ) \bigg) q^{-\frac{n}{2}} 
+ \sum_{n=0}^{\degree Q -3} \bigg( \sum_{\substack{i+j=n \\ 0 \leq i,j \leq \degree Q}} p (i+1,j+1) L_i (\chi ) L_j (\charinv\chi ) \bigg) q^{-\frac{n-2}{2}} \\
- &\sum_{n=0}^{\degree Q -2} \bigg( \sum_{\substack{i+j=n \\ 0 \leq i,j \leq \degree Q}} p (i,j+1) L_i (\chi ) L_j (\charinv\chi ) \bigg) q^{-\frac{n-1}{2}} 
- \sum_{n=0}^{\degree Q -2} \bigg( \sum_{\substack{i+j=n \\ 0 \leq i,j \leq \degree Q}} p (i+1,j) L_i (\chi ) L_j (\charinv\chi ) \bigg) q^{-\frac{n-1}{2}} .
\end{align*}
Grouping the terms together gives
\begin{align*}
&\sum_{n=0}^{\degree Q -1} \bigg( \sum_{\substack{i+j=n \\ 0 \leq i,j \leq \degree Q}} p (i,j) M_i (\chi ) M_j (\charinv\chi ) \bigg) q^{-\frac{n}{2}} \\
= &\sum_{n=0}^{\degree Q -1} \bigg( \sum_{\substack{i+j=n \\ 0 \leq i,j \leq \degree Q}} \Big[ q p(i+1,j+1) - q^{\frac{1}{2}} p (i,j+1) - q^{\frac{1}{2}} p (i+1,j) + p (i,j) \Big] L_i (\chi ) L_j (\charinv\chi ) \bigg) q^{-\frac{n}{2}} \\
- &\sum_{\substack{i+j=\degree Q -2 \\ 0 \leq i,j \leq \degree Q}} q p(i+1,j+1)  L_i (\chi ) L_j (\charinv\chi ) q^{-\frac{\degree Q -2 }{2}} \\
+ &\sum_{\substack{i+j=\degree Q -1 \\ 0 \leq i,j \leq \degree Q}}  \Big( q^{\frac{1}{2}} p (i,j+1) + q^{\frac{1}{2}} p (i+1,j) - q p(i+1,j+1) \Big)  L_i (\chi ) L_j (\charinv\chi ) q^{\frac{\degree Q -1 }{2}} .
\end{align*}
In the case where 
\begin{align*}
p(i,j)
= i^k j^k + (\degree Q -i)^k (\degree Q -j)^k
\end{align*}
we have that
\begin{align*}
q p(i+1,j+1) - q^{\frac{1}{2}} p (i,j+1) - q^{\frac{1}{2}} p (i+1,j) + p (i,j) 
= (q^{\frac{1}{2}} -1)^2 \Big( f_k (i,j, \degree Q) + g_{E,k} (i,j, \degree Q) \Big) ,
\end{align*}
where $g_{E,k} (i,j, \degree Q)$ is a polynomial of degree $2k-1$ whose coefficients can be bounded independently of $q$. \\

We can now see that (\ref{Even func eq after s=1/2 and simplifying}) becomes
\begin{align*}
\frac{1}{(\log q)^{2k} (q^{\frac{1}{2}} -1)^2} \hat{L}^{(k)} \Big( \frac{1}{2}, \chi \Big)
= & \sum_{n=0}^{\degree Q -1} \bigg( \sum_{\substack{i+j=n \\ 0 \leq i,j \leq \degree Q}} \Big( f_k (i,j, \degree Q) + g_{E,k} (i,j, \degree Q) \Big) L_i (\chi ) L_j (\charinv\chi ) \bigg) q^{-\frac{n}{2}} \\
&+ \sum_{n= \degree Q -2}^{\degree Q} \bigg( \sum_{\substack{i+j= n \\ 0 \leq i,j \leq \degree Q}} h_{E,k,n} (i,j, \degree Q) L_i (\chi ) L_j (\charinv\chi ) \bigg) q^{-\frac{n}{2}} ,
\end{align*}
where $h_{E,k,n} (i,j, \degree Q)$ is a polynomial of degree $k$ whose coefficients can be bounded independently of $q$. Finally, we substitute back $L_n (\chi) = \sum_{\substack{A \in \mathcal{M} \\ \degree A = n}} \chi (A)$ to obtain the required result.
\end{proof}


\section{Fourth Moments: Handling the Summations}

We now demonstrate some techniques for handling the summations that we obtained in Section \ref{Fourth Moments: Expressing as  Manageable Summations}.

\begin{lemma} \label{Lemma, all nontriv char split to diag, off diag, remainder}
Let $Q \in \mathcal{M}$ be prime, and let $p_1 \big( \degree A, \degree B, \degree Q \big)$ and $p_2 \big( \degree A, \degree B, \degree Q \big)$ be finite polynomials (which, for presentational purposes, we will write as $p_1$ and $p_2$, except when we need to use other variables for the parametres). Then,
\begin{align*}
&\frac{1}{\phi (Q)} \sum_{\substack{\chi \modulus Q \\ \chi \neq \chi_0}} \bigg( \sum_{\substack{ A,B \in \mathcal{M} \\ \degree AB < \degree Q}} \frac{p_1 \; \chi (A) \charinv\chi (B)}{\lvert AB \rvert^{\frac{1}{2}}} \bigg) \bigg( \sum_{\substack{ C,D \in \mathcal{M} \\ \degree CD < \degree Q}} \frac{p_2 \; \chi (C) \charinv\chi (D)}{\lvert CD \rvert^{\frac{1}{2}}} \bigg) \\
= &\sum_{\substack{A,B,C,D \in \mathcal{M} \\ \degree AB < \degree Q \\ \degree CD < \degree Q  \\ AC = BD}} \frac{p_1 p_2}{\lvert ABCD \rvert^{\frac{1}{2}}}
+ \sum_{\substack{A,B,C,D \in \mathcal{M} \\ \degree AB < \degree Q \\ \degree CD < \degree Q \\ AC \equiv BD (\modulus Q)  \\ AC \neq BD}} \frac{p_1 p_2}{\lvert ABCD \rvert^{\frac{1}{2}}}
- \frac{1}{\phi (Q)} \sum_{\substack{A,B,C,D \in \mathcal{M} \\ \degree AB < \degree Q \\ \degree CD < \degree Q}} \frac{p_1 p_2}{\lvert ABCD \rvert^{\frac{1}{2}}} .
\end{align*}
\end{lemma}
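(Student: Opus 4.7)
The proof is essentially a swap of summations followed by an application of the orthogonality relations in Corollary \ref{Sum of chi(A) inv-chi(B) over chi mod R}, with careful handling of the trivial character which is excluded from the outer sum. I first expand the product of the two inner sums, pull the $\chi$-sum inside, and combine $\chi(A)\charinv\chi(B)\chi(C)\charinv\chi(D) = \chi(AC)\charinv\chi(BD)$, giving
\begin{align*}
\frac{1}{\phi (Q)} \sum_{\substack{\chi \modulus Q \\ \chi \neq \chi_0}} \sum_{\substack{A,B,C,D \in \mathcal{M} \\ \degree AB , \degree CD < \degree Q}} \frac{p_1 p_2 \, \chi (AC) \charinv\chi (BD)}{\lvert ABCD \rvert^{\frac{1}{2}}}
= \sum_{\substack{A,B,C,D \in \mathcal{M} \\ \degree AB , \degree CD < \degree Q}} \frac{p_1 p_2}{\lvert ABCD \rvert^{\frac{1}{2}}} \cdot \frac{1}{\phi (Q)} \sum_{\substack{\chi \modulus Q \\ \chi \neq \chi_0}} \chi (AC) \charinv\chi (BD) .
\end{align*}

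\textbf{Evaluating the character sum.} I would write
\begin{align*}
\sum_{\substack{\chi \modulus Q \\ \chi \neq \chi_0}} \chi (AC) \charinv\chi (BD)
= \sum_{\chi \modulus Q} \chi (AC) \charinv\chi (BD) - \chi_0 (AC) \charinv\chi_0 (BD) ,
\end{align*}
and then apply Corollary \ref{Sum of chi(A) inv-chi(B) over chi mod R} to the first term. At this point the crucial observation is that $\degree AB < \degree Q$ forces $\degree A , \degree B < \degree Q$, and likewise $\degree C , \degree D < \degree Q$; since $Q$ is prime, this gives $(A,Q) = (B,Q) = (C,Q) = (D,Q) = 1$, and hence $(ABCD , Q) = 1$ automatically on the support of the outer sum. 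Therefore the coprimality side-condition in Corollary \ref{Sum of chi(A) inv-chi(B) over chi mod R} is satisfied throughout, $\chi_0 (AC) \charinv\chi_0 (BD) = 1$, and
\begin{align*}
\frac{1}{\phi (Q)} \sum_{\substack{\chi \modulus Q \\ \chi \neq \chi_0}} \chi (AC) \charinv\chi (BD)
= \mathbf{1} \big[ AC \equiv BD \, (\modulus Q) \big] - \frac{1}{\phi (Q)} .
\end{align*}

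\textbf{Splitting the main indicator.} Substituting back and splitting the indicator $\mathbf{1}[AC \equiv BD \, (\modulus Q)]$ into the two disjoint cases $AC = BD$ and $AC \neq BD$ (with $AC \equiv BD$) produces exactly the three terms in the statement. There is no analytic difficulty here: the whole argument is formal manipulation, and the only point that requires genuine thought is verifying the coprimality condition so that the $-1/\phi(Q)$ remainder term is clean and is summed over the full range of $A,B,C,D$ with $\degree AB , \degree CD < \degree Q$ (rather than over some more awkward subset). The anticipated ``main obstacle'' is simply keeping the support conditions straight when splitting off the trivial character; the primality of $Q$ together with the degree constraints is what makes this painless.
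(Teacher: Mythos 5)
Your proof is correct and follows essentially the same route as the paper, which simply expands the brackets and applies Corollary \ref{Sum of chi(A) inv-chi(B) over chi mod R}; your additional observation that $\degree AB < \degree Q$ and the primality of $Q$ make the coprimality condition automatic is exactly the (implicit) justification needed. No issues.
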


\begin{proof}
This follows by expanding the brackets and applying Corollary \ref{Sum of chi(A) inv-chi(B) over chi mod R}
\end{proof}

\begin{lemma}
Let $p \big( \degree A, \degree B, \degree C , \degree D, \degree Q \big)$ be a finite homogeneous polynomial of degree $d$. Then,
\begin{align*}
&\sum_{\substack{A,B,C,D \in \mathcal{M} \\ \degree AB < \degree Q \\ \degree CD < \degree Q  \\ AC = BD}} \frac{p \big( \degree A, \degree B, \degree C , \degree D, \degree Q \big)}{\lvert ABCD \rvert^{\frac{1}{2}}} \\
= &(1- q^{-1}) (\degree Q)^{d+4} \int_{\substack{a_1 , a_2 , a_3 , a_4 \geq 0 \\ 2a_1 + a_3 + a_4 < 1 \\ 2a_2 + a_3 + a_4 < 1}}  p \big( a_1 + a_3 , a_1 + a_4 , a_2 + a_4 , a_2 + a_3 , 1 \big) \mathrm{d} a_1 \mathrm{d} a_2 \mathrm{d} a_3 \mathrm{d} a_4 \\
& + O_p \big( (\degree Q)^{d+3} \big) ,
\end{align*}
as $\degree Q \longrightarrow \infty$.
\end{lemma}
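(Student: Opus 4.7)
The plan is to resolve the constraint $AC = BD$ in $\mathcal{M}^4$ via a standard greatest-common-divisor change of variables, which turns the sum into an inner coprime sum (evaluable via M\"{o}bius inversion and the value of $\zeta_{\mathcal{A}}(2)$) multiplied by an outer quadruple sum that is asymptotically a Riemann sum for the claimed integral. Concretely, setting $E := (A, B)$ and writing $A = Ea$, $B = Eb$ with $(a, b) = 1$, the equation $AC = BD$ becomes $aC = bD$, forcing $C = bF$ and $D = aF$ for some $F \in \mathcal{M}$; conversely any $(E, F, a, b) \in \mathcal{M}^4$ with $(a, b) = 1$ produces a valid $(A, B, C, D)$. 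Under this bijection $\lvert ABCD \rvert^{1/2} = \lvert EFab \rvert$, the constraints become $2\degree E + \degree a + \degree b < \degree Q$ and $2\degree F + \degree a + \degree b < \degree Q$, and the arguments of $p$ become $(\degree E + \degree a,\, \degree E + \degree b,\, \degree b + \degree F,\, \degree a + \degree F,\, \degree Q)$.

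Since $\sum_{E \in \mathcal{M}_e} \lvert E \rvert^{-1} = 1$ for every $e \geq 0$ (and likewise for $F$), summing over $E$ and $F$ at fixed degrees contributes trivial weights. What remains is a sum over $e, f, \alpha := \degree a, \beta := \degree b$ with an inner weight
\[ C(\alpha, \beta) := \sum_{\substack{a \in \mathcal{M}_\alpha,\ b \in \mathcal{M}_\beta \\ (a, b) = 1}} \lvert ab \rvert^{-1} = \sum_{\substack{d \in \mathcal{M} \\ \degree d \leq \min(\alpha, \beta)}} \mu(d) \lvert d \rvert^{-2}, \]
by M\"{o}bius inversion. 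Completing the $d$-sum to all of $\mathcal{M}$ yields $C(\alpha, \beta) = \zeta_{\mathcal{A}}(2)^{-1} + O(q^{-\min(\alpha, \beta)}) = (1 - q^{-1}) + O(q^{-\min(\alpha, \beta)})$. The main factor $(1 - q^{-1})$ is precisely what appears in the statement; the tail error, when multiplied by $p = O_p((\degree Q)^d)$ and summed against the $O((\degree Q)^2)$ pairs $(e, f)$ and the sum $\sum_{\alpha, \beta} q^{-\min(\alpha, \beta)} = O(\degree Q)$ (with implied constant uniform in $q$), contributes $O_p((\degree Q)^{d+3})$.

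For the main term, homogeneity of $p$ lets me factor out $(\degree Q)^d$:
\[ p(e + \alpha,\, e + \beta,\, \beta + f,\, \alpha + f,\, \degree Q) = (\degree Q)^d\, P\!\left( \tfrac{e}{\degree Q},\, \tfrac{f}{\degree Q},\, \tfrac{\alpha}{\degree Q},\, \tfrac{\beta}{\degree Q} \right), \]
where $P(a_1, a_2, a_3, a_4) := p(a_1 + a_3,\, a_1 + a_4,\, a_2 + a_4,\, a_2 + a_3,\, 1)$. The surviving quadruple sum is then a Riemann sum with step $1/\degree Q$ for the integral of $P$ over the polytope $\{ a_1, a_2, a_3, a_4 \geq 0 : 2a_1 + a_3 + a_4 < 1,\ 2a_2 + a_3 + a_4 < 1 \}$; scaled by $(\degree Q)^4$ it equals $(\degree Q)^4 \int P + O((\degree Q)^3)$, by the standard estimate on a polytope with piecewise linear boundary (the error being controlled by surface area times step size times $\sup|P|$, with $\sup|P| = O_p(1)$ since $P$ is a polynomial on a compact region). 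Putting everything together gives the main term $(1 - q^{-1})(\degree Q)^{d+4} \int P$ with overall error $O_p((\degree Q)^{d+3})$.

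The hard part will be the clean bookkeeping of the several error sources --- the M\"{o}bius tail, the Riemann discretization, and the polytope-boundary contribution --- and verifying that all implied constants depend only on $p$ (and not on $q$), so that they can be absorbed uniformly into $O_p(\,\cdot\,)$. The Riemann-sum analysis on the polytope is conceptually the most delicate single step, though not technically difficult given the piecewise linear boundary and the polynomial nature of $P$.
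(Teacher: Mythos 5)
Your proposal is correct and follows essentially the same route as the paper: the same gcd-parametrisation of the diagonal $AC=BD$ (your $(E,F,a,b)$ is the paper's $(G,H,R,S)$ with $(R,S)=1$), the same source of the factor $1-q^{-1}=\zeta_{\mathcal{A}}(2)^{-1}$ (your M\"{o}bius inversion is equivalent to the paper's shifted-sum identity from Lemma \ref{ZetaCoprimeDoubleSum}), and the same Riemann-sum passage to the integral over the polytope. The only difference is presentational: the paper packages the degree weights via a generating function and the operators $\Omega_i = t_i \frac{\mathrm{d}}{\mathrm{d}t_i}$, whereas you substitute directly and invoke homogeneity, which amounts to the same computation.
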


\begin{remark}
The subscript $p$ in $O_p$ should be interpreted as saying that the implied constant is dependent on the coefficients of $p$, but not dependent on the degree $d$.
\end{remark}

\begin{proof}
Consider the function $f$ defined by
\begin{align} \label{f(t_1 ... t_4) with degree A ... D}
f(t_1 , t_2 , t_3 , t_4 )
= \sum_{\substack{A,B,C,D \in \mathcal{M} \\ AC=BD}} \frac{{t_1}^{\degree A} {t_2}^{\degree B} {t_3}^{\degree C} {t_4}^{\degree D}}{\lvert ABCD \rvert^{\frac{1}{2}}}
\end{align}
with domain $\lvert t_i \rvert < \frac{1}{2} q^{-\frac{1}{2}}$. Note that $AC=BD$ if and only if there exist $G,H,R,S \in \mathcal{M}$ satisfying $(R,S)=1$ and $A=GR$, $B=GS$, $C=HS$, $D=HR$. Hence,
\begin{align}
\begin{split} \label{f(t_1 ... t_4) with degree FR ... GR}
&f(t_1 , t_2 , t_3 , t_4 ) \\
&= \sum_{\substack{G,H,R,S \in \mathcal{M} \\ (R,S)=1}} \frac{{t_1}^{\degree GR} {t_2}^{\degree GS} {t_3}^{\degree HS} {t_4}^{\degree HR}}{\lvert GHRS \rvert} \\
&= \sum_{G,H,R,S \in \mathcal{M}} \frac{{t_1}^{\degree GR} {t_2}^{\degree GS} {t_3}^{\degree HS} {t_4}^{\degree HR}}{\lvert GHRS \rvert}
- q^{-1} \sum_{G,H,R,S \in \mathcal{M}} \frac{{t_1}^{\degree GR +1} {t_2}^{\degree GS +1} {t_3}^{\degree HS +1} {t_4}^{\degree HR +1}}{\lvert GHRS \rvert} \\
&= \sum_{ a_1 , a_2 , a_3 , a_4 \geq 0} {t_1}^{a_1 + a_3} {t_2}^{a_1 + a_4} {t_3}^{a_2 + a_4} {t_4}^{a_2 + a_3}
- q^{-1} \sum_{ a_1 , a_2 , a_3 , a_4 \geq 0} {t_1}^{a_1 + a_3 +1} {t_2}^{a_1 + a_4 +1} {t_3}^{a_2 + a_4 +1} {t_4}^{a_2 + a_3 +1} ,
\end{split}
\end{align}
where the second equality follows by similar means as in the proof of Lemma \ref{ZetaCoprimeDoubleSum}. \\

Now, for $i=1,2,3,4$ we define the operator $\Omega_i := t_i \frac{\mathrm{d}}{\mathrm{d} t_i}$. For non-negative integers $k_1 , k_2 , k_3 , k_4$ we can apply the operator ${\Omega_1 }^{k_1} {\Omega_2 }^{k_2} {\Omega_3 }^{k_3} {\Omega_4 }^{k_4}$ to (\ref{f(t_1 ... t_4) with degree A ... D}) and (\ref{f(t_1 ... t_4) with degree FR ... GR}) to get
\begin{align*}
&\sum_{\substack{A,B,C,D \in \mathcal{M} \\ AC=BD}} \frac{(\degree A)^{k_1} (\degree B)^{k_2} (\degree C)^{k_3} (\degree D)^{k_4}}{\lvert ABCD \rvert^{\frac{1}{2}}} {t_1}^{\degree A} {t_2}^{\degree B} {t_3}^{\degree C} {t_4}^{\degree D} \\
&= \sum_{ a_1 , a_2 , a_3 , a_4 \geq 0} (a_1 + a_3 )^{k_1} (a_1 + a_4 )^{k_2} (a_2 + a_4 )^{k_3} (a_2 + a_3 )^{k_4} {t_1}^{a_1 + a_3} {t_2}^{a_1 + a_4} {t_3}^{a_2 + a_4} {t_4}^{a_2 + a_3} \\
& \quad - q^{-1} \sum_{ a_1 , a_2 , a_3 , a_4 \geq 0} (a_1 + a_3 +1)^{k_1} (a_1 + a_4 +1)^{k_2} (a_2 + a_4 +1)^{k_3} (a_2 + a_3 +1)^{k_4} \\
& \quad \quad \quad \quad \quad \quad \quad \quad \quad \quad \quad \quad \cdot {t_1}^{a_1 + a_3 +1} {t_2}^{a_1 + a_4 +1} {t_3}^{a_2 + a_4 +1} {t_4}^{a_2 + a_3 +1} \\
&= (1 - q^{-1} ) \sum_{ a_1 , a_2 , a_3 , a_4 \geq 0} (a_1 + a_3 )^{k_1} (a_1 + a_4 )^{k_2} (a_2 + a_4 )^{k_3} (a_2 + a_3 )^{k_4} {t_1}^{a_1 + a_3} {t_2}^{a_1 + a_4} {t_3}^{a_2 + a_4} {t_4}^{a_2 + a_3} \\
& \quad + q^{-1} \sum_{\substack{(a_1 , a_2 ) = (0,0) , (0,1) , (1,0) \\ a_3 , a_4 \geq 0}} (a_1 + a_3 )^{k_1} (a_1 + a_4 )^{k_2} (a_2 + a_4 )^{k_3} (a_2 + a_3 )^{k_4} {t_1}^{a_1 + a_3 } {t_2}^{a_1 + a_4 } {t_3}^{a_2 + a_4 } {t_4}^{a_2 + a_3 } .
\end{align*}
From this we can deduce that if $p \big( \degree A, \degree B, \degree C , \degree D, \degree Q \big)$ is a finite homogeneous polyomial of degree $d$, then
\begin{align*}
&\sum_{\substack{A,B,C,D \in \mathcal{M} \\ AC=BD}} \frac{p \big( \degree A, \degree B, \degree C , \degree D, \degree Q \big)}{\lvert ABCD \rvert^{\frac{1}{2}}} {t_1}^{\degree A} {t_2}^{\degree B} {t_3}^{\degree C} {t_4}^{\degree D} \\
&= (1- q^{-1}) \sum_{ a_1 , a_2 , a_3 , a_4 \geq 0}  p \big( a_1 + a_3 , a_1 + a_4 , a_2 + a_4 , a_2 + a_3 , \degree Q \big) {t_1}^{a_1 + a_3} {t_2}^{a_1 + a_4} {t_3}^{a_2 + a_4} {t_4}^{a_2 + a_3} \\
& + q^{-1} \sum_{\substack{(a_1 , a_2 ) = (0,0) , (0,1) , (1,0) \\ a_3 , a_4 \geq 0}}  p \big( a_1 + a_3 , a_1 + a_4 , a_2 + a_4 , a_2 + a_3 , \degree Q \big) {t_1}^{a_1 + a_3 } {t_2}^{a_1 + a_4 } {t_3}^{a_2 + a_4 } {t_4}^{a_2 + a_3 } .
\end{align*}

Now, we can extract and sum the coefficients of ${t_1}^{i_1} {t_2}^{i_2} {t_3}^{i_3} {t_4}^{i_4}$ for which $i_1 + i_2 < \degree Q$ and $i_3 + i_4 < \degree Q$ to get
\begin{align*}
&\sum_{\substack{A,B,C,D \in \mathcal{M} \\ \degree AB < \degree Q \\ \degree CD < \degree Q \\ AC=BD}} \frac{p \big( \degree A, \degree B, \degree C , \degree D, \degree Q \big)}{\lvert ABCD \rvert^{\frac{1}{2}}} \\
&= (1- q^{-1}) \sum_{\substack{ a_1 , a_2 , a_3 , a_4 \geq 0 \\ 2a_1 + a_3 + a_4 < \degree Q \\ 2a_2 + a_3 + a_4 < \degree Q}}  p \big( a_1 + a_3 , a_1 + a_4 , a_2 + a_4 , a_2 + a_3 , \degree Q \big) \\
& \quad + q^{-1} \sum_{\substack{(a_1 , a_2 ) = (0,0) , (0,1) , (1,0) \\ a_3 , a_4 \geq 0 \\ 2a_1 + a_3 + a_4 < \degree Q \\ 2a_2 + a_3 + a_4 < \degree Q}}  p \big( a_1 + a_3 , a_1 + a_4 , a_2 + a_4 , a_2 + a_3 , \degree Q \big) \\
&= (1- q^{-1}) \int_{\substack{a_1 , a_2 , a_3 , a_4 \geq 0 \\ 2a_1 + a_3 + a_4 < \degree Q \\ 2a_2 + a_3 + a_4 < \degree Q}}  p \big( a_1 + a_3 , a_1 + a_4 , a_2 + a_4 , a_2 + a_3 , \degree Q \big) \mathrm{d} a_1 \mathrm{d} a_2 \mathrm{d} a_3 \mathrm{d} a_4 \\
& \quad + O_p \big( (\degree Q)^{d+3} \big) + O_p \big( (\degree Q)^{d+2} \big) \\
&= (1- q^{-1}) (\degree Q)^{d+4} \int_{\substack{a_1 , a_2 , a_3 , a_4 \geq 0 \\ 2a_1 + a_3 + a_4 < 1 \\ 2a_2 + a_3 + a_4 < 1}}  p \big( a_1 + a_3 , a_1 + a_4 , a_2 + a_4 , a_2 + a_3 , 1 \big) \mathrm{d} a_1 \mathrm{d} a_2 \mathrm{d} a_3 \mathrm{d} a_4 \\
& \quad + O_p \big( (\degree Q)^{d+3} \big) 
\end{align*}
as $\degree Q \longrightarrow \infty$.
\end{proof}

\begin{lemma}
Let $p \big( \degree A, \degree B, \degree C , \degree D, \degree Q \big)$ be a finite polynomial of degree $d$. Then,
\begin{align*}
\sum_{\substack{A,B,C,D \in \mathcal{M} \\ \degree AB < \degree Q \\ \degree CD < \degree Q \\ AC \equiv BD (\modulus Q)  \\ AC \neq BD}} \frac{p \big( \degree A, \degree B, \degree C , \degree D, \degree Q \big)}{\lvert ABCD \rvert^{\frac{1}{2}}}
\ll_p (\degree Q)^{d+3}
\end{align*}
as $\degree Q \longrightarrow \infty$.
\end{lemma}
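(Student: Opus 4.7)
My plan is to reparametrize the off-diagonal quadruples via the auxiliary polynomial $N := (AC - BD)/Q$ and bound the number of factorizations by the divisor function $\tau$, reducing the estimate to a classical divisor-in-arithmetic-progression bound. Writing $M := \degree Q$, since $p$ has total degree $d$ in arguments bounded by $M$, I would first apply the trivial bound $|p| \leq C_p M^d$ to reduce the claim to $\mathcal{S} \ll M^3$, where
\[
\mathcal{S} := \sum_{\substack{A,B,C,D \in \mathcal{M} \\ \degree AB, \degree CD < M \\ AC \equiv BD (\modulus Q), AC \neq BD}} \frac{1}{|ABCD|^{1/2}}.
\]

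For each quadruple in the sum I would set $E := AC$, $F := BD$, and $N := (E-F)/Q \in \mathcal{A} \setminus \{0\}$. The degree constraints give $\degree E + \degree F < 2M$, while $N \neq 0$ gives $|E - F| \geq |Q|$; if $\degree E = \degree F = m$, then cancellation of leading terms would force $\degree(E-F) \leq m-1$, which together with $2m < 2M$ contradicts $|E-F| \geq |Q|$, so $\degree E \neq \degree F$. Since $\mathcal{S}$ is invariant under $(A,B,C,D) \mapsto (B,A,D,C)$, which swaps $E$ and $F$, I may assume $\degree E > \degree F$ at the cost of a factor of $2$; then $\degree E = M + \degree N$, $\degree F < M$, $\degree N < M - \degree F$, and comparing leading coefficients shows that $N$ is monic. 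Bounding the number of monic factorizations $(A,C)$ of $E$ and $(B,D)$ of $F$ by $\tau(E)$ and $\tau(F)$ respectively would then yield
\[
\mathcal{S} \ll \sum_{\substack{F \in \mathcal{M} \\ \degree F < M}} \frac{\tau(F)}{|F|^{1/2}} \sum_{\substack{N \in \mathcal{M} \\ \degree N < M - \degree F}} \frac{\tau(F + QN)}{|F + QN|^{1/2}}.
\]

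The principal obstacle, and key technical input, is the function-field analog of Titchmarsh's divisor-in-arithmetic-progression estimate:
\[
\sum_{\substack{E \in \mathcal{M}, \degree E = M + n \\ E \equiv R (\modulus Q)}} \tau(E) \ll M q^n
\]
for $n \geq 0$ and $R \in \mathcal{M}$ with $\degree R < M$. I would establish this via the Dirichlet hyperbola method, splitting divisors by degree and exploiting the fact that monic polynomials of degree $\geq M$ equidistribute among residue classes modulo $Q$; alternatively, via orthogonality of characters combined with Weil-type bounds for the non-principal contributions. Granting this estimate, a geometric sum in $q^{n/2}$ shows the inner sum over $N$ is $\ll M/|F|^{1/2}$. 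Plugging this back in and using the standard identity $\sum_{F \in \mathcal{M},\, \degree F = f} \tau(F) = (f+1) q^f$ would yield
\[
\mathcal{S} \ll M \sum_{f=0}^{M-1}(f+1) \ll M^3,
\]
and combining with the $M^d$ bound on $|p|$ would give the advertised $\ll_p M^{d+3}$.
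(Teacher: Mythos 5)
Your combinatorial reduction (passing to $E=AC$, $F=BD$, $N=(E-F)/Q$, noting $\degree E\neq\degree F$, and bounding the factorisations by $\tau(E)\tau(F)$) is sound, and the final arithmetic is correct \emph{conditional on} your ``key technical input''. But that input is where the entire difficulty of the lemma lives, and as you have stated it, it is false: for $n=0$ the progression contains a single element and the claim reads $\tau(R+Q)\ll \degree Q$, whereas a monic polynomial of degree $M=\degree Q$ can have as many as $2^{\,cM/\log_q M}$ divisors, which exceeds any fixed power of $M$. Neither of the two methods you gesture at will rescue it in the range that matters. The hyperbola method gives $\sum_{E\equiv R\,(\modulus Q),\,\degree E=M+n}\tau(E)\ll (n+1)q^{n}+q^{(M+n)/2}$, and the second term dominates $Mq^{n}$ for all $n$ up to about $M-2\log_q M$; feeding that into your computation returns $\mathcal{S}\ll M^{2}q^{M/2}$, exponentially worse than $M^{3}$. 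Orthogonality of characters fares no better: summing the Weil bound over the $\phi(Q)$ non-principal characters contributes an error of order $\binom{2M}{M+n}q^{(M+n)/2}$, which swamps the expected main term $(M+n)q^{n}$ whenever $n$ is noticeably smaller than $M$. This is the regime of the Titchmarsh divisor problem in which the modulus is nearly as large as the variable, and no standard technique delivers the bound you need.

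This is, in fact, exactly the trap flagged in Remark \ref{Remark, Addressing Tamam Error}: Tamam's original argument used $d(N)\ll\degree N$, which is false, and repairing this is the non-trivial content of Lemma 7.10 of \cite{FourPowMeanDirLFuncFuncField}. The paper's own proof of the present lemma applies the trivial bound $p\ll_p(\degree Q)^{d}$, splits according to $z_1=\degree AB$ and $z_2=\degree CD$, and quotes that lemma, which gives $\ll_\epsilon \lvert Q\rvert^{-1}\big(q^{z_1}q^{z_2}\big)^{1+\epsilon}$ for $z_1+z_2\le\frac{19}{10}\degree Q$ (an $\epsilon$-loss divisor bound suffices there because of the $\lvert Q\rvert^{-1}$ saving) and the sharp count $\ll\phi(Q)^{-1}q^{z_1+z_2}(z_1+z_2)^{3}$ only in the remaining range, where the progressions are long enough for a genuine divisor estimate. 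To complete your argument you would need either to prove an estimate of comparable strength or to cite one; as written, the proposal assumes the hard part.
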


\begin{proof}
Because $\degree AB , \degree CD < \degree Q$, we have that
\begin{align*}
p \big( \degree A, \degree B, \degree C , \degree D, \degree Q \big)
\ll_p (\degree Q)^{d} .
\end{align*}
Hence,
\begin{align}
\begin{split} \label{Off-diagonal z_1, z_2 split}
&\sum_{\substack{A,B,C,D \in \mathcal{M} \\ \degree AB < \degree Q \\ \degree CD < \degree Q \\ AC \equiv BD (\modulus Q)  \\ AC \neq BD}} \frac{p \big( \degree A, \degree B, \degree C , \degree D, \degree Q \big)}{\lvert ABCD \rvert^{\frac{1}{2}}} \\
\ll_p &(\degree Q)^{d} \sum_{\substack{A,B,C,D \in \mathcal{M} \\ \degree AB < \degree Q \\ \degree CD < \degree Q \\ AC \equiv BD (\modulus Q)  \\ AC \neq BD}} \frac{1}{\lvert ABCD \rvert^{\frac{1}{2}}} 
= (\degree Q)^{d} \sum_{0 \leq z_1 , z_2 < \degree Q} q^{-\frac{z_1 + z_2}{2}} \sum_{\substack{A,B,C,D \in \mathcal{M} \\ \degree AB = z_1 \\ \degree CD = z_2 \\ AC \equiv BD (\modulus Q)  \\ AC \neq BD}} 1 .
\end{split}
\end{align}

Now, Lemma 7.10 from \cite{FourPowMeanDirLFuncFuncField} tells us that for non-negative integers $z_1 , z_2$ we have
\begin{align}\label{Reference for off-diagonal terms from primitive moments paper}
\sum_{\substack{A,B,C,D \in \mathcal{M} \\ \degree AB = z_1 \\ \degree CD = z_2 \\ AC \equiv BD (\modulus Q)  \\ AC \neq BD}} 1
\begin{cases}
\ll_{\epsilon } \frac{1}{\lvert Q \rvert} \big( q^{z_1} q^{z_2} \big)^{1+\epsilon} &\text{ if $z_1 + z_2 \leq \frac{19}{10} \degree Q$} \\
\ll \frac{1}{\phi (Q)} q^{z_1} q^{z_2} (z_1 + z_2 )^3 &\text{ if $z_1 + z_2 > \frac{19}{10} \degree Q$} .
\end{cases}
\end{align}
Hence, for $\epsilon < \frac{1}{38}$ we have
\begin{align*}
&\sum_{0 \leq z_1 , z_2 < \degree Q} q^{-\frac{z_1 + z_2}{2}} \sum_{\substack{A,B,C,D \in \mathcal{M} \\ \degree AB = z_1 \\ \degree CD = z_2 \\ AC \equiv BD (\modulus Q)  \\ AC \neq BD}} 1 \\
\ll &\frac{1}{\lvert Q \rvert} \sum_{\substack{0 \leq z_1 , z_2 < \degree Q \\ z_1 + z_2 \leq \frac{19}{10} \degree Q}} \big( q^{\frac{1}{2} + \epsilon} \big)^{z_1 + z_2} 
+ \frac{1}{\phi (Q)} \sum_{\substack{0 \leq z_1 , z_2 < \degree Q \\  z_1 + z_2 > \frac{19}{10} \degree Q}} q^{\frac{z_1 + z_2}{2}} (z_1 + z_2)^3  \\
\ll &\frac{\lvert Q \rvert}{\phi (Q)} (\degree Q)^3 
\ll (\degree Q)^3 
\end{align*}
as $\degree Q \longrightarrow \infty$. The result follows by applying this to (\ref{Off-diagonal z_1, z_2 split}).
\end{proof}

\begin{remark} \label{Remark, Addressing Tamam Error}
In her paper, Tamam \cite[Lemma 8.5]{FourthMoment_Tamam} states a similar result as (\ref{Reference for off-diagonal terms from primitive moments paper}) above. However, in her proof she claims that $d(N) \ll \deg N$, which is not the case. Addressing this is non-trivial and was done in \cite{FourPowMeanDirLFuncFuncField}, as stated above.
\end{remark}

\begin{lemma} \label{Lemma, all non-trv char remainder}
Let $p \big( \degree A, \degree B, \degree C , \degree D, \degree Q \big)$ be a finite polynomial of degree $d$. Then,
\begin{align*}
\frac{1}{\phi (Q)} \sum_{\substack{A,B,C,D \in \mathcal{M} \\ \degree AB < \degree Q \\ \degree CD < \degree Q}} \frac{p \big( \degree A, \degree B, \degree C , \degree D, \degree Q \big)}{\lvert ABCD \rvert^{\frac{1}{2}}}
\ll_p (\degree Q)^{d+2} .
\end{align*}
\end{lemma}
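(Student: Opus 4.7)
The plan is to exploit the fact that all four degree variables are bounded by $\degree Q$, so the polynomial factor contributes only a power of $\degree Q$, and the remaining unweighted sum factors nicely. First I would note that, since $\degree A \leq \degree AB < \degree Q$ and likewise for $B, C, D$, every monomial in $p$ evaluated at $\big(\degree A, \degree B, \degree C, \degree D, \degree Q\big)$ is bounded by a constant (depending on the coefficients of $p$) times $(\degree Q)^d$. Hence
\begin{align*}
\frac{1}{\phi (Q)} \sum_{\substack{A,B,C,D \in \mathcal{M} \\ \degree AB < \degree Q \\ \degree CD < \degree Q}} \frac{p \big( \degree A, \degree B, \degree C , \degree D, \degree Q \big)}{\lvert ABCD \rvert^{\frac{1}{2}}}
\ll_p \frac{(\degree Q)^d}{\phi (Q)} \sum_{\substack{A,B,C,D \in \mathcal{M} \\ \degree AB < \degree Q \\ \degree CD < \degree Q}} \frac{1}{\lvert ABCD \rvert^{\frac{1}{2}}} .
\end{align*}

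Next I would factor the remaining quadruple sum as the square of the double sum over $A,B$, and evaluate the latter by grouping according to $n = \degree A + \degree B$. Since there are $q^a$ monic polynomials of degree $a$, there are $(n+1) q^n$ ordered pairs $(A,B) \in \mathcal{M}^2$ with $\degree AB = n$, so
\begin{align*}
\sum_{\substack{A,B \in \mathcal{M} \\ \degree AB < \degree Q}} \frac{1}{\lvert AB \rvert^{\frac{1}{2}}}
= \sum_{n=0}^{\degree Q -1} (n+1) q^{\frac{n}{2}} .
\end{align*}
A geometric-series estimate (or the telescoping argument used in Lemma \ref{Lemma, sum of n^k q^(n/2) over n=0 to deg Q -1}) yields $\sum_{n=0}^{\degree Q -1} (n+1) q^{n/2} \ll (\degree Q) \lvert Q \rvert^{\frac{1}{2}}$, and squaring gives
\begin{align*}
\sum_{\substack{A,B,C,D \in \mathcal{M} \\ \degree AB < \degree Q \\ \degree CD < \degree Q}} \frac{1}{\lvert ABCD \rvert^{\frac{1}{2}}}
\ll (\degree Q)^2 \lvert Q \rvert .
\end{align*}

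Finally, combining the two bounds and using $\phi (Q) = \lvert Q \rvert -1 \gg \lvert Q \rvert$ (as $Q$ is prime), the $\lvert Q \rvert$ cancels and we obtain the desired $O_p \big( (\degree Q)^{d+2} \big)$. There is no genuine obstacle here: the whole argument is just a crude bound on $p$ followed by an elementary counting of monic polynomials, and the factor $1/\phi (Q)$ exactly absorbs the $\lvert Q \rvert$ arising from the geometric sum. The only minor point worth verifying is the uniformity of the implied constant in the coefficients (but not the degree $d$) of $p$, which is immediate from the monomial-by-monomial estimate in the first step.
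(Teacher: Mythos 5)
Your proposal is correct and follows essentially the same route as the paper: bound $p$ by $O_p\big((\degree Q)^d\big)$ using $\degree A, \degree B, \degree C, \degree D < \degree Q$, factor the quadruple sum as the square of the double sum, evaluate that double sum as $\sum_{n < \degree Q}(n+1)q^{n/2} \ll (\degree Q)\lvert Q\rvert^{1/2}$, and cancel the resulting $\lvert Q \rvert$ against $\phi(Q)$. The only difference is purely presentational (you collapse the double sum over $\degree A, \degree B$ into a single sum over $n = \degree A + \degree B$ with multiplicity $n+1$, while the paper keeps it as a sum over two indices), so nothing further is needed.
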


\begin{proof}
Because $\degree AB , \degree CD < \degree Q$, we have that
\begin{align*}
p \big( \degree A, \degree B, \degree C , \degree D, \degree Q \big)
\ll_p (\degree Q)^{d} .
\end{align*}
Hence,
\begin{align*}
&\frac{1}{\phi (Q)} \sum_{\substack{A,B,C,D \in \mathcal{M} \\ \degree AB < \degree Q \\ \degree CD < \degree Q}} \frac{p \big( \degree A, \degree B, \degree C , \degree D, \degree Q \big)}{\lvert ABCD \rvert^{\frac{1}{2}}} \\
\ll_p &\frac{(\degree Q)^{d}}{\phi (Q)} \bigg( \sum_{\substack{A,B \in \mathcal{M} \\ \degree AB < \degree Q}} \frac{1}{\lvert AB \rvert^{\frac{1}{2}}} \bigg) \bigg(  \sum_{\substack{C,D \in \mathcal{M} \\ \degree CD < \degree Q}} \frac{1}{\lvert CD \rvert^{\frac{1}{2}}} \bigg) \\
= &\frac{(\degree Q)^{d}}{\phi (Q)} \bigg( \sum_{\substack{n,m \geq 0 \\ n+m < \degree Q}} q^{\frac{m+n}{2}} \bigg)^2 
\ll (\degree Q)^{d+2} .
\end{align*}
\end{proof}

From Lemmas \ref{Lemma, all nontriv char split to diag, off diag, remainder} to \ref{Lemma, all non-trv char remainder} we can deduce the following:

\begin{proposition} \label{Proposition, sum of all char of p_1 p_2}
Let $Q \in \mathcal{M}$ be prime, and let $p_1 \big( \degree A, \degree B, \degree Q \big)$ and $p_2 \big( \degree A, \degree B, \degree Q \big)$ be finite homogeneous polynomials of degree $d_1$ and $d_2$, respectively. Then,
\begin{align*}
&\frac{1}{\phi (Q)} \sum_{\substack{\chi \modulus Q \\ \chi \neq \chi_0}} \bigg( \sum_{\substack{ A,B \in \mathcal{M} \\ \degree AB < \degree Q}} \frac{p_1 \; \chi (A) \charinv\chi (B)}{\lvert AB \rvert^{\frac{1}{2}}} \bigg) \bigg( \sum_{\substack{ C,D \in \mathcal{M} \\ \degree CD < \degree Q}} \frac{p_2 \; \chi (C) \charinv\chi (D)}{\lvert CD \rvert^{\frac{1}{2}}} \bigg) \\
= &(1- q^{-1}) (\degree Q)^{d_1 + d_2 +4} \int_{\substack{a_1 , a_2 , a_3 , a_4 \geq 0 \\ 2a_1 + a_3 + a_4 < 1 \\ 2a_2 + a_3 + a_4 < 1}}  p_1 \big( a_1 + a_3 , a_1 + a_4 , 1 \big) p_2 \big( a_2 + a_4 , a_2 + a_3 , 1 \big) \mathrm{d} a_1 \mathrm{d} a_2 \mathrm{d} a_3 \mathrm{d} a_4 \\
& + O_{p_1 , p_2 } \big( (\degree Q)^{d_1 + d_2 +3} \big) .
\end{align*}
\end{proposition}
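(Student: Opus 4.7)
The plan is to combine the four preceding lemmas in sequence. First I would apply Lemma \ref{Lemma, all nontriv char split to diag, off diag, remainder} to the left-hand side. Taking
\begin{align*}
p \big( \degree A , \degree B , \degree C , \degree D , \degree Q \big) := p_1 \big( \degree A , \degree B , \degree Q \big) \, p_2 \big( \degree C , \degree D , \degree Q \big),
\end{align*}
observe that $p$ is a finite homogeneous polynomial of degree $d := d_1 + d_2$ in its first four arguments weighted together with $\degree Q$, and that $p_1 p_2$ in the statement of the previous lemmas should be read as this $p$. The lemma then decomposes the average into a diagonal piece (indexed by $AC = BD$), an off-diagonal piece (indexed by $AC \equiv BD \pmod Q$ with $AC \neq BD$), and a remainder term weighted by $-1/\phi(Q)$.

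Next I would treat each piece separately. The diagonal sum, by the lemma that parametrises $AC = BD$ via $A = GR$, $B = GS$, $C = HS$, $D = HR$ with $(R,S)=1$, yields
\begin{align*}
(1 - q^{-1}) (\degree Q)^{d+4} \int_{\substack{a_1 , a_2 , a_3 , a_4 \geq 0 \\ 2a_1 + a_3 + a_4 < 1 \\ 2a_2 + a_3 + a_4 < 1}} p_1 \big( a_1 + a_3 , a_1 + a_4 , 1 \big) p_2 \big( a_2 + a_4 , a_2 + a_3 , 1 \big) \, \mathrm{d} a_1 \mathrm{d} a_2 \mathrm{d} a_3 \mathrm{d} a_4 + O_{p_1 , p_2} \big( (\degree Q)^{d+3} \big),
\end{align*}
which is exactly the claimed main term together with an admissible error. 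The off-diagonal sum is absorbed into $O_{p_1,p_2}\big( (\degree Q)^{d+3} \big)$ by the lemma that invokes the estimate (\ref{Reference for off-diagonal terms from primitive moments paper}) from \cite{FourPowMeanDirLFuncFuncField}. Finally, the remainder term is $O_{p_1,p_2}\big( (\degree Q)^{d+2} \big)$ by Lemma \ref{Lemma, all non-trv char remainder}, which is of lower order and is therefore also absorbed into the error.

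Adding the three contributions gives the stated asymptotic, and no further work is needed beyond verifying that $p_1 p_2$ is homogeneous of degree $d_1 + d_2$ so that the hypotheses of the intermediate lemmas are satisfied. There is no real obstacle here; the arithmetic content (the diagonal parametrisation, and the delicate off-diagonal bound of \cite{FourPowMeanDirLFuncFuncField} that corrects the gap in \cite{FourthMoment_Tamam} noted in Remark \ref{Remark, Addressing Tamam Error}) has already been isolated in the lemmas, and the proof of the proposition is a direct bookkeeping assembly.
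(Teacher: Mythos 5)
Your proposal is correct and is essentially the paper's own argument: the paper states that the proposition "can be deduced" from Lemmas \ref{Lemma, all nontriv char split to diag, off diag, remainder} through \ref{Lemma, all non-trv char remainder}, which is precisely the diagonal/off-diagonal/remainder assembly you carry out, with the product $p_1 p_2$ playing the role of the homogeneous polynomial $p$ of degree $d_1+d_2$. Nothing is missing.
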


Similarly, the following can be proved:

\begin{proposition} \label{Proposition, sum of even char of p_1 p_2}
Let $Q \in \mathcal{M}$ be prime, and let $p_1 \big( \degree A, \degree B, \degree Q \big)$ and $p_2 \big( \degree A, \degree B, \degree Q \big)$ be finite homogeneous polynomials of degree $d_1$ and $d_2$, respectively. Then,
\begin{align*}
&\frac{1}{\phi (Q)} \sum_{\substack{\chi \modulus Q \\ \chi \text{ even} \\ \chi \neq \chi_0}} \bigg( \sum_{\substack{ A,B \in \mathcal{M} \\ \degree AB < \degree Q}} \frac{p_1 \; \chi (A) \charinv\chi (B)}{\lvert AB \rvert^{\frac{1}{2}}} \bigg) \bigg( \sum_{\substack{ C,D \in \mathcal{M} \\ \degree CD < \degree Q}} \frac{p_2 \; \chi (C) \charinv\chi (D)}{\lvert CD \rvert^{\frac{1}{2}}} \bigg) \\
= &q^{-1} (\degree Q)^{d_1 + d_2 +4} \int_{\substack{a_1 , a_2 , a_3 , a_4 \geq 0 \\ 2a_1 + a_3 + a_4 < 1 \\ 2a_2 + a_3 + a_4 < 1}}  p_1 \big( a_1 + a_3 , a_1 + a_4 , 1 \big) p_2 \big( a_2 + a_4 , a_2 + a_3 , 1 \big) \mathrm{d} a_1 \mathrm{d} a_2 \mathrm{d} a_3 \mathrm{d} a_4 \\
& + O_{p_1 , p_2 } \big( (\degree Q)^{d_1 + d_2+3} \big) .
\end{align*}
\end{proposition}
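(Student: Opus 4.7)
The plan is to mirror the proof of Proposition \ref{Proposition, sum of all char of p_1 p_2} step by step, substituting the even-character orthogonality relation (the second half of Corollary \ref{Sum of chi(A) inv-chi(B) over chi mod R}) for the full one. First I would expand the product of the two inner brackets and interchange the order of summation, obtaining an inner character sum $\sum_{\substack{\chi \text{ even} , \chi \neq \chi_0}} \chi (AC) \charinv\chi (BD)$. Applying the even orthogonality relation and isolating the trivial character contribution (which contributes $1$ whenever $(ACBD, Q) = 1$), then dividing by $\phi (Q)$, reduces the left-hand side to the even-character analogue of Lemma \ref{Lemma, all nontriv char split to diag, off diag, remainder}, namely
\begin{align*}
\frac{1}{q-1} \sum_{\substack{A,B,C,D \in \mathcal{M} \\ \degree AB , \degree CD < \degree Q \\ AC \equiv aBD (\modulus Q) \\ \text{for some } a \in {\mathbb{F}_q}^*}} \frac{p_1 p_2}{\lvert ABCD \rvert^{\frac{1}{2}}} \; - \; \frac{1}{\phi (Q)} \sum_{\substack{A,B,C,D \in \mathcal{M} \\ \degree AB , \degree CD < \degree Q}} \frac{p_1 p_2}{\lvert ABCD \rvert^{\frac{1}{2}}} .
\end{align*}

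Next I would split the first sum into a diagonal part (the congruence holds with equality $AC = aBD$) and an off-diagonal part (congruence without equality). Because $A,B,C,D$ are monic, $AC$ and $BD$ are both monic, so the identity $AC = aBD$ with $a \in {\mathbb{F}_q}^*$ forces $a = 1$; the diagonal therefore collapses to the familiar set $\{AC = BD\}$ already treated in the proof of Proposition \ref{Proposition, sum of all char of p_1 p_2}. Invoking that diagonal lemma yields
\begin{align*}
\frac{1}{q-1} \sum_{\substack{A,B,C,D \in \mathcal{M} \\ \degree AB , \degree CD < \degree Q \\ AC = BD}} \frac{p_1 p_2}{\lvert ABCD \rvert^{\frac{1}{2}}}
= \tfrac{1-q^{-1}}{q-1} (\degree Q)^{d_1+d_2+4} I + O_{p_1,p_2}\big( (\degree Q)^{d_1+d_2+3} \big)
= q^{-1} (\degree Q)^{d_1+d_2+4} I + O_{p_1,p_2}\big( (\degree Q)^{d_1+d_2+3} \big) ,
\end{align*}
where $I$ denotes the integral appearing in the statement. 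The elementary identity $\tfrac{1-q^{-1}}{q-1} = q^{-1}$ is the structural reason the leading constant changes from $1-q^{-1}$ in the previous proposition to $q^{-1}$ here.

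For the off-diagonal part I would stratify over $a \in {\mathbb{F}_q}^*$. For each fixed $a$, the count of monic quadruples with $\degree AB = z_1$, $\degree CD = z_2$, $AC \equiv aBD \, (\modulus Q)$ and $AC \neq aBD$ obeys the same bound as in Lemma 7.10 of \cite{FourPowMeanDirLFuncFuncField} (the $a = 1$ case), since the residue-class structure of $AC(BD)^{-1}$ modulo $Q$ is translation-invariant under multiplication by ${\mathbb{F}_q}^*$; summing the $q-1$ contributions and using the prefactor $\tfrac{1}{q-1}$ then recovers the bound $O_{p_1,p_2}\big( (\degree Q)^{d_1+d_2+3} \big)$. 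The remainder sum is handled verbatim by Lemma \ref{Lemma, all non-trv char remainder}, contributing $O_{p_1,p_2}\big( (\degree Q)^{d_1+d_2+2} \big)$. Combining the three estimates yields the claimed asymptotic. The only genuinely new technical point, and the main (modest) obstacle, is verifying that the stratification of the off-diagonal by $a \in {\mathbb{F}_q}^*$ does not introduce a $q$-dependence that escapes the implied constant; this is ensured by the exact cancellation between the factor $q-1$ arising from the sum over $a$ and the prefactor $\tfrac{1}{q-1}$.
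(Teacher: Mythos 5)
Your proposal is correct and takes essentially the same route as the paper, which does not write the proof out but states only that it is analogous to Proposition \ref{Proposition, sum of all char of p_1 p_2}: the even orthogonality relation introduces the factor $\frac{1}{q-1}$, monicity collapses the diagonal $AC = aBD$ to $a=1$, and the identity $\frac{1-q^{-1}}{q-1} = q^{-1}$ produces the new leading constant exactly as you describe. The one small divergence is that for the off-diagonal terms the paper simply cites Lemma 7.11 of \cite{FourPowMeanDirLFuncFuncField} (which handles the twisted congruences $AC \equiv aBD \ (\modulus Q)$ directly) rather than deducing uniformity in $a$ from Lemma 7.10 as you do, but both yield the same $O_{p_1 , p_2} \big( (\degree Q)^{d_1 + d_2 + 3} \big)$ bound.
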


The proof of Proposition \ref{Proposition, sum of even char of p_1 p_2} is similar to the proof of Proposition \ref{Proposition, sum of all char of p_1 p_2}. From \cite{FourPowMeanDirLFuncFuncField}, we use Lemma 7.11 instead of Lemma 7.10. \\

We can similarly prove the following:

\begin{proposition} \label{Proposition, short sum of even/all char of p_1 p_2}
Let $Q \in \mathcal{M}$ be prime, let $p_1 \big( \degree A, \degree B, \degree Q \big)$ and $p_2 \big( \degree A, \degree B, \degree Q \big)$ be finite homogeneous polynomials of degree $d_1$ and $d_2$, respectively, and let $a \in \{ 0,1,2,3 \}$. Then,
\begin{align*}
\frac{1}{\phi (Q)} \sum_{\substack{\chi \modulus Q \\ \chi \neq \chi_0}} \bigg( \sum_{\substack{ A,B \in \mathcal{M} \\ \degree AB = \degree Q -a}} \frac{p_1 \; \chi (A) \charinv\chi (B)}{\lvert AB \rvert^{\frac{1}{2}}} \bigg) \bigg( \sum_{\substack{ C,D \in \mathcal{M} \\ \degree CD = \degree Q -a}} \frac{p_2 \; \chi (C) \charinv\chi (D)}{\lvert CD \rvert^{\frac{1}{2}}} \bigg)
= O_{p_1 , p_2 } \big( (\degree Q)^{d_1 + d_2 +3} \big) ,
\end{align*}
and
\begin{align*}
\frac{1}{\phi (Q)} \sum_{\substack{\chi \modulus Q \\ \chi \text{ even} \\ \chi \neq \chi_0}} \bigg( \sum_{\substack{ A,B \in \mathcal{M} \\ \degree AB = \degree Q -a}} \frac{p_1 \; \chi (A) \charinv\chi (B)}{\lvert AB \rvert^{\frac{1}{2}}} \bigg) \bigg( \sum_{\substack{ C,D \in \mathcal{M} \\ \degree CD = \degree Q -a}} \frac{p_2 \; \chi (C) \charinv\chi (D)}{\lvert CD \rvert^{\frac{1}{2}}} \bigg)
= O_{p_1 , p_2 } \big( (\degree Q)^{d_1 + d_2+3} \big) .
\end{align*}
\end{proposition}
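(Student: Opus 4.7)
The plan mirrors the proof of Proposition \ref{Proposition, sum of all char of p_1 p_2}, with the only essential difference being that the degree restriction $\degree AB < \degree Q$ is replaced by the sharp equality $\degree AB = \degree Q - a$. I would begin by applying Corollary \ref{Sum of chi(A) inv-chi(B) over chi mod R} (and its even-character analogue) to decompose the character average into three pieces: a diagonal contribution from $AC = BD$, an off-diagonal contribution from $AC \equiv BD \; (\modulus Q)$ with $AC \neq BD$, and a trivial-character remainder, in direct analogy with Lemma \ref{Lemma, all nontriv char split to diag, off diag, remainder}. On the support of the resulting sums the factor $\lvert ABCD \rvert^{-\frac{1}{2}}$ equals the constant $q^{-(\degree Q - a)}$, while the homogeneity of $p_1, p_2$ gives $p_1 p_2 \ll_{p_1, p_2} (\degree Q)^{d_1 + d_2}$ throughout.

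For the diagonal I would reuse the parametrisation $A = GR$, $B = GS$, $C = HS$, $D = HR$ with $(R,S) = 1$ from the preceding lemma. The sharp conditions $\degree AB = \degree CD = \degree Q - a$ force $\degree G = \degree H$, so the number of admissible quadruples is bounded by $q^{\degree Q - a}$ times $O((\degree Q)^2)$ choices of the triple $(\degree G, \degree R, \degree S)$. Combined with the constant factor $q^{-(\degree Q - a)}$ and the $(\degree Q)^{d_1 + d_2}$ bound on $p_1 p_2$, this contributes $O_{p_1, p_2}((\degree Q)^{d_1 + d_2 + 2})$, comfortably within the stated bound.

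For the off-diagonal I would apply the estimate (\ref{Reference for off-diagonal terms from primitive moments paper}) from \cite{FourPowMeanDirLFuncFuncField} with $z_1 = z_2 = \degree Q - a$; since $z_1 + z_2 = 2\degree Q - 2a$ exceeds $\frac{19}{10} \degree Q$ for $\degree Q$ sufficiently large, the applicable bound is $\ll \phi(Q)^{-1} q^{z_1 + z_2} (z_1 + z_2)^3$, and combining this with $(\degree Q)^{d_1 + d_2}$ and $q^{-(\degree Q - a)}$ yields an off-diagonal contribution of size $O_{p_1, p_2}((\degree Q)^{d_1 + d_2 + 3})$, using that $q^{\degree Q}/\phi(Q) = O(1)$ as $Q$ is prime. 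For the even-character case one invokes Lemma 7.11 of \cite{FourPowMeanDirLFuncFuncField} in place of Lemma 7.10. Finally, for the trivial-character remainder I would use the crude identity $\sum_{A,B \in \mathcal{M}, \, \degree AB = n} \lvert AB \rvert^{-\frac{1}{2}} = (n+1) q^{n/2}$; squaring and dividing by $\phi(Q)$ produces an $O_{p_1, p_2}((\degree Q)^{d_1 + d_2 + 2})$ contribution.

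The main obstacle is purely organisational rather than analytic: once the sharp degree constraint is threaded carefully through the coprime parametrisation and the cited off-diagonal estimate is applied in the correct regime $z_1 + z_2 > \frac{19}{10} \degree Q$, every required tool has already been developed in the preceding propositions, and no genuinely new ingredient appears.
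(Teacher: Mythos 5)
Your proposal is correct and is essentially the argument the paper has in mind: the paper omits the proof of this proposition, remarking only that it is proved ``similarly'' to Propositions \ref{Proposition, sum of all char of p_1 p_2} and \ref{Proposition, sum of even char of p_1 p_2} using Lemmas 7.10 and 7.11 of \cite{FourPowMeanDirLFuncFuncField}, which is exactly the decomposition into diagonal, off-diagonal, and trivial-character pieces that you carry out. Your bookkeeping is sound — in particular the observation that $z_1 + z_2 = 2\degree Q - 2a > \frac{19}{10}\degree Q$ places the off-diagonal sum in the second regime of the cited estimate, which is where the dominant $(\degree Q)^{d_1+d_2+3}$ term arises.
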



\section{Fourth Moments}

We are now equipped to prove the fourth moment result.

\begin{proof}[Proof of Theorem \ref{fourth_moment_theorem}]
We have that
\begin{align}
\begin{split} \label{L^(k) L^(l) split into odd and even char}
&\frac{1}{\phi (Q)} \sum_{\substack{\chi \modulus Q \\ \chi \neq \chi_0}} \Big\lvert L^{(k)} \Big( \frac{1}{2} , \chi \Big) \Big\rvert^{2} \Big\lvert L^{(l)} \Big( \frac{1}{2} , \chi \Big) \Big\rvert^{2} \\
&= \frac{1}{\phi (Q)} \sum_{\substack{\chi \modulus Q \\ \chi \text{ odd}}} \Big\lvert L^{(k)} \Big( \frac{1}{2} , \chi \Big) \Big\rvert^{2} \Big\lvert L^{(l)} \Big( \frac{1}{2} , \chi \Big) \Big\rvert^{2}
+ \frac{1}{\phi (Q)} \sum_{\substack{\chi \modulus Q \\ \chi \text{ even} \\ \chi \neq \chi_0}} \Big\lvert L^{(k)} \Big( \frac{1}{2} , \chi \Big) \Big\rvert^{2} \Big\lvert L^{(l)} \Big( \frac{1}{2} , \chi \Big) \Big\rvert^{2} .
\end{split}
\end{align}

Using Proposition \ref{Proposition, odd character L(1/2 ,chi) shorter sum}, we have for the first term on the RHS that
\begin{align}
\begin{split} \label{Odd char L^(k) L^(l) split}
&\frac{1}{\phi (Q)} \frac{1}{(\log q)^{2k+2l}} \sum_{\substack{\chi \modulus Q \\ \chi \text{ odd}}} \Big\lvert L^{(k)} \Big( \frac{1}{2} , \chi \Big) \Big\rvert^{2} \Big\lvert L^{(l)} \Big( \frac{1}{2} , \chi \Big) \Big\rvert^{2} \\
&= \frac{1}{\phi (Q)} \sum_{\substack{\chi \modulus Q \\ \chi \text{ odd}}}
\Bigg( \sum_{\substack{A,B \in \mathcal{M} \\ \degree AB < \degree Q}} \frac{ \Big( f_{k} + g_{O,k} \Big) \chi (A) \charinv\chi (B)}{\lvert AB \rvert^{\frac{1}{2}}} + \sum_{\substack{A,B \in \mathcal{M} \\ \degree AB = \degree Q -1}} \frac{ h_{O,k} \chi (A) \charinv\chi (B)}{\lvert AB \rvert^{\frac{1}{2}}} \Bigg) \\
&\quad \quad \quad \quad \quad \quad \quad \quad \cdot \Bigg( \sum_{\substack{C,D \in \mathcal{M} \\ \degree CD < \degree Q}} \frac{ \Big( f_{l} + g_{O,l} \Big) \chi (C) \charinv\chi (D)}{\lvert CD \rvert^{\frac{1}{2}}} + \sum_{\substack{C,D \in \mathcal{M} \\ \degree CD = \degree Q -1}} \frac{ h_{O,l} \chi (C) \charinv\chi (D)}{\lvert CD \rvert^{\frac{1}{2}}} \Bigg) .
\end{split}
\end{align}
By using Propositions \ref{Proposition, sum of all char of p_1 p_2} and \ref{Proposition, sum of even char of p_1 p_2}, we have that
\begin{align*}
&\frac{1}{\phi (Q)} \sum_{\substack{\chi \modulus Q \\ \chi \text{ odd}}}
\Bigg( \sum_{\substack{A,B \in \mathcal{M} \\ \degree AB < \degree Q}} \frac{ \Big( f_{k} + g_{O,k} \Big) \chi (A) \charinv\chi (B)}{\lvert AB \rvert^{\frac{1}{2}}}  \Bigg)
\Bigg( \sum_{\substack{C,D \in \mathcal{M} \\ \degree CD < \degree Q}} \frac{ \Big( f_{l} + g_{O,l} \Big) \chi (C) \charinv\chi (D)}{\lvert CD \rvert^{\frac{1}{2}}} \Bigg) \\
&= \frac{1}{\phi (Q)} \sum_{\substack{\chi \modulus Q \\ \chi \neq \chi_0}}
\Bigg( \sum_{\substack{A,B \in \mathcal{M} \\ \degree AB < \degree Q}} \frac{ \Big( f_{k} + g_{O,k} \Big) \chi (A) \charinv\chi (B)}{\lvert AB \rvert^{\frac{1}{2}}}  \Bigg)
\Bigg( \sum_{\substack{C,D \in \mathcal{M} \\ \degree CD < \degree Q}} \frac{ \Big( f_{l} + g_{O,l} \Big) \chi (C) \charinv\chi (D)}{\lvert CD \rvert^{\frac{1}{2}}} \Bigg) \\
& \quad - \frac{1}{\phi (Q)} \sum_{\substack{\chi \modulus Q \\ \chi \text{ even} \\ \chi \neq \chi_0}}
\Bigg( \sum_{\substack{A,B \in \mathcal{M} \\ \degree AB < \degree Q}} \frac{ \Big( f_{k} + g_{O,k} \Big) \chi (A) \charinv\chi (B)}{\lvert AB \rvert^{\frac{1}{2}}}  \Bigg)
\Bigg( \sum_{\substack{C,D \in \mathcal{M} \\ \degree CD < \degree Q}} \frac{ \Big( f_{l} + g_{O,l} \Big) \chi (C) \charinv\chi (D)}{\lvert CD \rvert^{\frac{1}{2}}} \Bigg) \\
&= (1- 2q^{-1}) (\degree Q)^{2k+2l+4} \int_{\substack{a_1 , a_2 , a_3 , a_4 \geq 0 \\ 2a_1 + a_3 + a_4 < 1 \\ 2a_2 + a_3 + a_4 < 1}}  f_k \big( a_1 + a_3 , a_1 + a_4 , 1 \big) f_l \big( a_2 + a_4 , a_2 + a_3 , 1 \big) \mathrm{d} a_1 \mathrm{d} a_2 \mathrm{d} a_3 \mathrm{d} a_4 \\
& \quad + O_{k,l} \Big( (\degree Q)^{2k+2l+3} \Big) 
\end{align*}
as $\degree Q \longrightarrow \infty$. Strictly speaking, Propositions \ref{Proposition, sum of all char of p_1 p_2} and \ref{Proposition, sum of even char of p_1 p_2} require that the polynomials $f_{k} + g_{O,k}$ and $f_{l} + g_{O,l}$ are homogeneous, which is not the case. However, these polynomials can be written as sums of homogeneous polynomials, with the terms of highest degree being $f_{k}$ and $f_{l}$, respectively. We can then apply the propositions term-by-term to obtain the result above. \\

We now have the main term of (\ref{Odd char L^(k) L^(l) split}). Indeed, for the remaining terms we can apply the Cauchy-Schwarz inequality and Propositions \ref{Proposition, sum of all char of p_1 p_2}, \ref{Proposition, sum of even char of p_1 p_2}, and \ref{Proposition, short sum of even/all char of p_1 p_2} to see that they are equal to $O_{k,l} \Big( (\degree Q)^{2k+2l+\frac{7}{2}} \Big)$. Hence,
\begin{align}
\begin{split} \label{Odd char L^(k) L^(l) split, main and LO terms}
&\frac{1}{\phi (Q)} \frac{1}{(\log q)^{2k+2l}} \sum_{\substack{\chi \modulus Q \\ \chi \text{ odd}}} \Big\lvert L^{(k)} \Big( \frac{1}{2} , \chi \Big) \Big\rvert^{2} \Big\lvert L^{(l)} \Big( \frac{1}{2} , \chi \Big) \Big\rvert^{2} \\
&= (1- 2q^{-1}) (\degree Q)^{2k+2l+4} \int_{\substack{a_1 , a_2 , a_3 , a_4 \geq 0 \\ 2a_1 + a_3 + a_4 < 1 \\ 2a_2 + a_3 + a_4 < 1}}  f_k \big( a_1 + a_3 , a_1 + a_4 , 1 \big) f_l \big( a_2 + a_4 , a_2 + a_3 , 1 \big) \mathrm{d} a_1 \mathrm{d} a_2 \mathrm{d} a_3 \mathrm{d} a_4 \\
& \quad + O_{k,l} \Big( (\degree Q)^{2k+2l+\frac{7}{2}} \Big) 
\end{split}
\end{align}
as $\degree Q \longrightarrow \infty$. \\

We now look at the second term on the RHS of (\ref{L^(k) L^(l) split into odd and even char}). By using Proposition \ref{Proposition, even character L(1/2 ,chi) shorter sum} and similar means as those used to deduce (\ref{Odd char L^(k) L^(l) split, main and LO terms}), we can show for all non-negative integers $i,j$ that
\begin{align*}
&\frac{1}{\phi (Q)} \frac{1}{(\log q)^{2i+2j}} \frac{1}{(q^{\frac{1}{2}} -1)^4} \sum_{\substack{\chi \modulus Q \\ \chi \text{ even} \\ \chi \neq \chi_0}} \Big\lvert \hat{L}^{(i)} \Big( \frac{1}{2} , \chi \Big) \Big\rvert^{2} \Big\lvert \hat{L}^{(j)} \Big( \frac{1}{2} , \chi \Big) \Big\rvert^{2} \\
&= \frac{q^{-1}}{(q^{\frac{1}{2}} -1)^4} (\degree Q)^{2i+2j+4} \int_{\substack{a_1 , a_2 , a_3 , a_4 \geq 0 \\ 2a_1 + a_3 + a_4 < 1 \\ 2a_2 + a_3 + a_4 < 1}}  f_i \big( a_1 + a_3 , a_1 + a_4 , 1 \big) f_j \big( a_2 + a_4 , a_2 + a_3 , 1 \big) \mathrm{d} a_1 \mathrm{d} a_2 \mathrm{d} a_3 \mathrm{d} a_4 \\
& \quad + O_{i,j} \Big( (\degree Q)^{2i+2j+\frac{7}{2}} \Big) 
\end{align*}
as $\degree Q \longrightarrow \infty$. Using Proposition \ref{Proposition, L in terms of L hat} and the Cauchy-Schwarz inequality, we obtain that
\begin{align}
\begin{split} \label{Even char L^(k) L^(l) split, main and LO terms}
&\frac{1}{\phi (Q)} \frac{1}{(\log q)^{2k+2l}} \sum_{\substack{\chi \modulus Q \\ \chi \text{ even} \\ \chi \neq \chi_0}} \Big\lvert L^{(k)} \Big( \frac{1}{2} , \chi \Big) \Big\rvert^{2} \Big\lvert L^{(l)} \Big( \frac{1}{2} , \chi \Big) \Big\rvert^{2} \\
&= q^{-1} (\degree Q)^{2k+2l+4} \int_{\substack{a_1 , a_2 , a_3 , a_4 \geq 0 \\ 2a_1 + a_3 + a_4 < 1 \\ 2a_2 + a_3 + a_4 < 1}}  f_k \big( a_1 + a_3 , a_1 + a_4 , 1 \big) f_l \big( a_2 + a_4 , a_2 + a_3 , 1 \big) \mathrm{d} a_1 \mathrm{d} a_2 \mathrm{d} a_3 \mathrm{d} a_4 \\
& \quad + O_{k,l} \Big( (\degree Q)^{2k+2l+\frac{7}{2}} \Big) 
\end{split}
\end{align}
as $\degree Q \longrightarrow \infty$. \\

The proof folllows from (\ref{L^(k) L^(l) split into odd and even char}), (\ref{Odd char L^(k) L^(l) split, main and LO terms}), (\ref{Even char L^(k) L^(l) split, main and LO terms}).
\end{proof}

We now proceed to prove Theorem \ref{Fourth moment coefficients asymptotic behaviour}.

\begin{lemma} \label{(1- x/m )^m bound}
Let $m$ be a positive integer. For all non-negative $x$ we have that
\begin{align*}
\Big(1- \frac{x}{m} \Big)^m \leq e^{-x} ,
\end{align*}
and for all $x \in [0, 2 m^{\frac{1}{3}}]$ we have that
\begin{align*}
\Big(1- \frac{x}{m} \Big)^m \geq e^{-x} e^{\frac{-4}{m^{\frac{1}{3}} - 2 m^{-\frac{1}{3}}}} .
\end{align*}
\end{lemma}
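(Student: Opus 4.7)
The plan is to reduce both inequalities to elementary estimates on $m \log(1 - x/m)$. For the upper bound, I would start from the universal inequality $1 + t \leq e^{t}$ (valid for all real $t$) with $t = -x/m$, obtaining $1 - x/m \leq e^{-x/m}$. Raising both sides to the $m$-th power preserves the inequality whenever both sides are non-negative, which is the case as soon as $x \leq m$ (and in particular throughout the range $[0, 2m^{1/3}]$ where the second inequality is needed). This yields $(1 - x/m)^{m} \leq e^{-x}$.

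For the lower bound, the plan is to use the Taylor series
\[
m \log(1 - x/m) = -x - \sum_{k=2}^{\infty} \frac{x^{k}}{k m^{k-1}},
\]
valid for $x < m$. Exponentiating converts this identity into
\[
(1 - x/m)^{m} = e^{-x} \exp \left( - \sum_{k=2}^{\infty} \frac{x^{k}}{k m^{k-1}} \right),
\]
so it suffices to upper bound the inner sum by $4/(m^{1/3} - 2 m^{-1/3})$ on the range $x \in [0, 2 m^{1/3}]$.

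The key step is to handle the tail sum via the crude bound $1/k \leq 1/2$ for $k \geq 2$ together with a geometric series:
\[
\sum_{k=2}^{\infty} \frac{x^{k}}{k m^{k-1}} \leq \frac{1}{2} \cdot \frac{x^{2}/m}{1 - x/m} = \frac{x^{2}}{2(m - x)}.
\]
For $x \in [0, 2 m^{1/3}]$ we have $x^{2} \leq 4 m^{2/3}$ and $m - x \geq m - 2 m^{1/3}$, so the right-hand side is at most $2/(m^{1/3} - 2 m^{-1/3})$, which is in turn bounded by the claimed $4/(m^{1/3} - 2 m^{-1/3})$.

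I do not anticipate any real obstacle: the argument is elementary calculus. The only points to monitor are ensuring $x < m$ so that the logarithmic series converges (automatic on the relevant range once $m$ is large enough), and tracking the constants so that the precise form $4/(m^{1/3} - 2 m^{-1/3})$ appears; in fact a sharper constant of $2$ is available, which leaves some slack in the stated inequality.
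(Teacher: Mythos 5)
Your proposal is correct and follows essentially the same route as the paper: the second inequality is obtained by exponentiating the Taylor expansion of $m\log\big(1-\tfrac{x}{m}\big)$ and bounding the tail by a geometric series, exactly as in the paper's proof (your retained factor of $\tfrac{1}{2}$ merely yields the constant $2$ where the paper settles for $4$, slack you correctly note). Your use of $1+t\le e^{t}$ for the first inequality is a trivial variant of the paper's observation that the log series is $\le -x$, and the restriction to $x\le m$ you flag there is equally implicit in the paper's argument, which needs $x<m$ for the series to converge.
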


\begin{proof}
By using the Taylor seies for $\log$ we have that
\begin{align*}
\log \Big( \big(1- \frac{x}{m} \big)^m \Big)
= -x - \frac{x^2}{2m} - \frac{x^3}{3m^2} - \frac{x^4}{4m^3} - \ldots .
\end{align*}
Clearly, the RHS is $\leq - x$, which proves the first inequality. For the second inequality we use the bounds on $x$ to obtain that
\begin{align*}
\frac{x^2}{2m} + \frac{x^3}{3m^2} + \frac{x^4}{4m^3} + \ldots
\leq \frac{x^2}{m} \sum_{i=0}^{\infty} \Big( \frac{x}{m} \Big)^i
= \frac{x^2}{m} \bigg( \frac{1}{1-\frac{x}{m}} \bigg)
\leq \bigg( \frac{4}{m^{\frac{1}{3}} -2m^{-\frac{1}{3}}} \bigg) ,
\end{align*}
from which the result follows.
\end{proof}

\begin{proof}[Proof of Theorem \ref{Fourth moment coefficients asymptotic behaviour}]
Let us expand the brackets in (\ref{4th moment, mth derivative main coeff limit}) and multiply by $m^4$. One of the terms is the following:
\begin{align*}
&m^4 \int_{\substack{a_1 , a_2 , a_3 , a_4 \geq 0 \\ 2a_1 + a_3 + a_4 < 1 \\ 2a_2 + a_3 + a_4 < 1}} (1- a_1 - a_3 )^m (1 - a_1 - a_4 )^m (1- a_2 - a_3 )^m (1 - a_2 - a_4 )^m \mathrm{d} a_1 \mathrm{d} a_2 \mathrm{d} a_3 \mathrm{d} a_4 \\ 
= &\int_{\substack{a_1 , a_2 , a_3 , a_4 \geq 0 \\ 2a_1 + a_3 + a_4 < m \\ 2a_2 + a_3 + a_4 < m }} \Big(1 - \frac{a_1 + a_3}{m} \Big)^m \Big(1 - \frac{a_1 + a_4}{m} \Big)^m \Big(1 - \frac{a_2 + a_3}{m} \Big)^m \Big(1 - \frac{a_2 + a_4}{m} \Big)^m \mathrm{d} a_1 \mathrm{d} a_2 \mathrm{d} a_3 \mathrm{d} a_4 ,
\end{align*}
where we have used the substitutions $a_i \to \frac{a_i}{m}$. On one hand, by using Lemma \ref{(1- x/m )^m bound}, we have that
\begin{align*}
&\int_{\substack{a_1 , a_2 , a_3 , a_4 \geq 0 \\ 2a_1 + a_3 + a_4 < m \\ 2a_2 + a_3 + a_4 < m }} \Big(1 - \frac{a_1 + a_3}{m} \Big)^m \Big(1 - \frac{a_1 + a_4}{m} \Big)^m \Big(1 - \frac{a_2 + a_3}{m} \Big)^m \Big(1 - \frac{a_2 + a_4}{m} \Big)^m \mathrm{d} a_1 \mathrm{d} a_2 \mathrm{d} a_3 \mathrm{d} a_4 \\
\geq &\int_{\substack{0 \leq a_1 , a_2 , a_3 , a_4 \leq m^{\frac{1}{3}} \\ 2a_1 + a_3 + a_4 < m \\ 2a_2 + a_3 + a_4 < m }} \Big(1 - \frac{a_1 + a_3}{m} \Big)^m \Big(1 - \frac{a_1 + a_4}{m} \Big)^m \Big(1 - \frac{a_2 + a_3}{m} \Big)^m \Big(1 - \frac{a_2 + a_4}{m} \Big)^m \mathrm{d} a_1 \mathrm{d} a_2 \mathrm{d} a_3 \mathrm{d} a_4 \\
\geq  & e^{\frac{-16}{m^{\frac{1}{3}} - 2 m^{-\frac{1}{3}}}} \int_{0 \leq a_1 , a_2 , a_3 , a_4 \leq m^{\frac{1}{3}}} e^{-2(a_1 + a_2 + a_3 + a_4)} \mathrm{d} a_1 \mathrm{d} a_2 \mathrm{d} a_3 \mathrm{d} a_4
\longrightarrow \frac{1}{16} 
\end{align*}
On the other hand, by the same lemma, we have that
\begin{align*}
&\int_{\substack{a_1 , a_2 , a_3 , a_4 \geq 0 \\ 2a_1 + a_3 + a_4 < m \\ 2a_2 + a_3 + a_4 < m }} \Big(1 - \frac{a_1 + a_3}{m} \Big)^m \Big(1 - \frac{a_1 + a_4}{m} \Big)^m \Big(1 - \frac{a_2 + a_3}{m} \Big)^m \Big(1 - \frac{a_2 + a_4}{m} \Big)^m \mathrm{d} a_1 \mathrm{d} a_2 \mathrm{d} a_3 \mathrm{d} a_4 \\
\leq &\int_{\substack{0 \leq a_1 , a_2 , a_3 , a_4 \leq m}} e^{-2(a_1 + a_2 + a_3 + a_4)} \mathrm{d} a_1 \mathrm{d} a_2 \mathrm{d} a_3 \mathrm{d} a_4
\longrightarrow \frac{1}{16} 
\end{align*}

So, we see that
\begin{align} \label{4th moment, mth derivative main coeff limit. Main term}
m^4 \int_{\substack{a_1 , a_2 , a_3 , a_4 \geq 0 \\ 2a_1 + a_3 + a_4 < 1 \\ 2a_2 + a_3 + a_4 < 1}} (1- a_1 - a_3 )^m (1 - a_1 - a_4 )^m (1- a_2 - a_3 )^m (1 - a_2 - a_4 )^m \mathrm{d} a_1 \mathrm{d} a_2 \mathrm{d} a_3 \mathrm{d} a_4 
\longrightarrow \frac{1}{16}
\end{align}
as $m \longrightarrow \infty$.\\

Now, after we expanded the brackets in (\ref{4th moment, mth derivative main coeff limit}) and multiplied by $m^4$, there were other terms. These can be seen to tend to $0$ as $m \longrightarrow \infty$. We prove one case below; the rest are similar.

\begin{align*}
&\int_{\substack{a_1 , a_2 , a_3 , a_4 \geq 0 \\ 2a_1 + a_3 + a_4 < 1 \\ 2a_2 + a_3 + a_4 < 1}} (1- a_1 - a_3 )^m (1 - a_1 - a_4 )^m (a_2 + a_3 )^m (a_2 + a_4 )^m \mathrm{d} a_1 \mathrm{d} a_2 \mathrm{d} a_3 \mathrm{d} a_4 \\
\leq &\int_{\substack{a_1 , a_2 , a_3 , a_4 \geq 0 \\ 2a_1 + a_3 + a_4 < 1 \\ 2a_2 + a_3 + a_4 < 1}} (a_2 + a_3 )^m (a_2 + a_4 )^m 
\ll \frac{1}{4^m} ,
\end{align*}
where we have used the following: The maximum value that $(a_2 + a_3 )(a_2 + a_4 )$ can take subject to the conditions in the integral is at most equal to the maximum value that $f(x,y) := xy$ can take subject to the conditions $x,y \geq 0$ and $x+y < 1$. By plotting this range and looking at contours of $f(x,y)$ we can see that the maximum value is $\frac{1}{4}$. The result follows. 
\end{proof}

\bibliography{YiasemidesBibliography1}{}
\bibliographystyle{bibstyle1}

\end{document}